\numberwithin{equation}{section}
\newtheorem{Theorem}{Theorem}[section]
\newtheorem{Proposition}[Theorem]{Proposition}
 { \theoremstyle{definition}
\newtheorem{Definition}[Theorem]{Definition}

\newtheorem{Example}[Theorem]{Example}
\newtheorem{Remark}[Theorem]{Remark} }
\begin{document}

\allowdisplaybreaks

\renewcommand{\thefootnote}{$\star$}

\newcommand{\arXivNumber}{1412.5981}

\renewcommand{\PaperNumber}{079}

\FirstPageHeading

\ShortArticleName{Lie Algebroids in the Loday--Pirashvili Category}

\ArticleName{Lie Algebroids in the Loday--Pirashvili Category\footnote{This paper is a~contribution to the Special Issue
on Poisson Geometry in Mathematics and Physics.
The full collection is available at \href{http://www.emis.de/journals/SIGMA/Poisson2014.html}{http://www.emis.de/journals/SIGMA/Poisson2014.html}}}

\Author{Ana ROVI~$^{\dag\ddag}$}

\AuthorNameForHeading{A.~Rovi}

\Address{$^\dag$~School of Mathematics and Statistics, University of Glasgow,\\
\hphantom{$^\dag$}~University Gardens, Glasgow G12 8QW, UK}

\Address{$^\ddag$~School of Mathematics and Statistics, Newcastle University,\\
\hphantom{$^\ddag$}~Newcastle upon Tyne NE1 7RU, UK}
\EmailD{\href{mailto:ana.rovi-garcia@ncl.ac.uk}{ana.rovi-garcia@ncl.ac.uk}}
\URLaddressD{\url{https://www.staff.ncl.ac.uk/ana.rovi-garcia/}}

\ArticleDates{Received February 27, 2015, in f\/inal form September 28, 2015; Published online October 02, 2015}

\Abstract{We describe Lie--Rinehart algebras in  the tensor category $\mathcal{LM}$ of linear maps in the sense of Loday and Pirashvili and
construct a functor from  Lie--Rinehart algebras  in~$\mathcal{LM}$ to Leibniz algebroids.}

\Keywords{Lie algebroid; Leibniz algebra; Courant algebroid; Leibniz algebroid}

\Classification{17A32; 53D17}

\renewcommand{\thefootnote}{\arabic{footnote}}
\setcounter{footnote}{0}

\section{Introduction}

Leibniz algebras were introduced by Loday and Pirashvili \cite{LodayLeibnizSurvey, LodayLeibnizEnveloping} as a non skew-symmetric generalisation of Lie algebras.  Leibniz algebras appear in dif\/ferential geometry as the algebraic structure on Courant algebroids~\cite{WeinsteinCourant}   and in mathematical physics, for example in Chern--Simons theory~\cite{Jose}.

Loday and Pirashvili \cite{LodayLeibnizEnveloping} observed  that Leibniz algebras  can be described   as the canonical map $\pi\colon \mathfrak g \rightarrow \mathfrak g_{\mathrm{Lie}}$ where $\mathfrak g$ is a Leibniz algebra and $\mathfrak g_{\mathrm{Lie}}$ is the Lie algebra which arises as  the quotient of $\mathfrak g$ by the Leibniz ideal generated by elements $[ x , x ]_{\mathfrak g}$ for $x \in \mathfrak g$. This observation leads to their def\/inition~\cite{Loday98} of the monoidal category $\mathcal{LM}$ of linear maps and to the construction of a~pair of adjoint functors between Lie algebras and Leibniz algebras. Note that the category~$\mathcal{LM}$ can be seen as the category of truncated chain complexes of length one.

In this paper, we focus on the interplay between  Lie--Rinehart algebras \cite{HuebschmannTerm} (Lie algebroids \cite{Pradines} in a dif\/ferential geometric context) and Leibniz algebroids \cite{BaragliaLeibniz, LeonLeibniz} which are  generalisations of  Lie algebras and Leibniz algebras respectively. Our goal is not to def\/ine a dif\/ferential geometric counterpart of Loday and Pirashvili's  category of linear maps but to use their construction to   describe (and understand) some interesting relations between Lie--Rinehart algebras and Leibniz algebras. In this sense, we describe Lie--Rinehart algebras in~$\mathcal{LM}$ and then   construct a   functor from  Lie--Rinehart algebras in~$\mathcal{LM}$ to  Leibniz algebroids.

Throughout, let $R$ be a unital commutative ring and let an unadorned $\otimes$ denote $\otimes_R$.

\begin{Theorem}
\label{thm1}
Let $( M \overset{g}{\rightarrow} A)$ be  a commutative $R$-algebra object and $( N \overset{ f}{\rightarrow} L)$ be a Lie algebra object in $\mathcal{LM}$.  A pair  $\big( ( M \overset{g}{\rightarrow} A) , ( N \overset{ f}{\rightarrow} L) \big)$ is a Lie--Rinehart algebra object in $\mathcal{LM}$ if

\begin{itemize}\itemsep=0pt
\item $( A , L)$ is a Lie--Rinehart algebra  with anchor $\rho_0 \colon L \rightarrow \mathrm{Der}_R(A)$,

\item the $A$-bimodule $M$ is a left $(A,L)$-module with action given by $\rho_2 \colon L  \rightarrow  \mathrm{Hom}_R (M,M) $,

\item the right $L$-module $N$ with action $N \otimes L \rightarrow N$ given by $n \otimes \xi \mapsto [n , \xi ]$ for all $n \in N$, $\xi \in L$  is also a left $A$-module with  $[ - , - ]$   satisfying
\begin{gather*}
 [ a \cdot n ,  \xi ]  = a \cdot [ n , \xi ] - \rho_0 ( \xi ) ( a ) \cdot n, \qquad \forall\, a \in A,
\end{gather*}

\item both  $f$ and $g$ are $L$-equivariant and $A$-linear,
\end{itemize}
and there exist
\begin{itemize}\itemsep=0pt
\item an $A$-module map $\lambda \colon M \otimes_A L \rightarrow N$,

\item an $A$-module map  $\rho_1 \colon N \rightarrow \mathrm{Der}_R (A ,M)$ satisfying
\begin{gather*}
g ( \rho_1 (n)(a) ) = \rho_0 ( f(n) ) (a) , \qquad   \rho_1 \left( [ n , \xi ] \right) = [ \rho_1 (n) , ( \rho_0 + \rho_2 ) ( \xi ) ]
\end{gather*}
for all $a \in A$, $m \in M$, $n \in N$, $\xi \in L$.
\end{itemize}
\end{Theorem}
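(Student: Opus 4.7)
The plan is to unpack the definition of a Lie--Rinehart algebra object in the symmetric monoidal category $\mathcal{LM}$, translating each internal axiom into concrete conditions on the data $(M, A, N, L, f, g)$. Recall that, in any symmetric monoidal category, a Lie--Rinehart algebra consists of a commutative algebra object $\mathcal{A}$, a Lie algebra object $\mathcal{L}$ which is simultaneously an $\mathcal{A}$-module, and an anchor $\mathcal{L} \to \mathrm{Der}(\mathcal{A})$ which is both a Lie algebra morphism and $\mathcal{A}$-linear, subject to a Leibniz rule. I would treat these four ingredients in turn.

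First I would identify the internal derivations object $\mathrm{Der}\big((M \overset{g}{\to} A)\big)$ in $\mathcal{LM}$. Applying the Loday--Pirashvili tensor product to $(M \to A)^{\otimes 2}$ and imposing the Leibniz rule for a morphism in $\mathcal{LM}$ from $(M \to A)$ to itself, one finds that a derivation is a triple $(D_0, D_1, D_2)$ consisting of a derivation $D_0 \in \mathrm{Der}_R(A)$, an endomorphism $D_2 \in \mathrm{End}_R(M)$ compatible with $g$ and with the $A$-action on $M$, and a derivation $D_1 \in \mathrm{Der}_R(A, M)$ linking the two. The derivations therefore form an object $\big(\mathrm{Der}_R(A,M) \to \mathrm{Der}_R(A) \oplus \mathrm{End}_R(M)\big)$ of $\mathcal{LM}$, with Lie bracket given by the commutator.

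Next I would interpret the anchor $(N \overset{f}{\to} L) \to \mathrm{Der}\big((M \overset{g}{\to} A)\big)$ as a morphism in $\mathcal{LM}$, which decomposes into maps $(\rho_0, \rho_2) \colon L \to \mathrm{Der}_R(A) \oplus \mathrm{End}_R(M)$ and $\rho_1 \colon N \to \mathrm{Der}_R(A, M)$, with commutativity of the resulting square yielding $g(\rho_1(n)(a)) = \rho_0(f(n))(a)$. Requiring the anchor to be a morphism of Lie algebra objects produces, on the top component, the classical condition that $\rho_0$ is a Lie algebra map to $\mathrm{Der}_R(A)$ (so that $(A,L)$ is a Lie--Rinehart algebra in the usual sense), and on the off-diagonal summand the identity $\rho_1([n,\xi]) = [\rho_1(n), (\rho_0+\rho_2)(\xi)]$. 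The $L$-equivariance and $A$-linearity of $f$ and $g$ then record simply that the underlying arrows of the algebra and Lie algebra objects respect all of the structure.

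Finally I would decode the $(M \overset{g}{\to} A)$-module structure on $(N \overset{f}{\to} L)$ and its compatibility with the bracket. The diagonal pieces of the action make $L$ and $N$ into $A$-modules with $f$ being $A$-linear, while the off-diagonal summand coming from $M \otimes L$ in the Loday--Pirashvili tensor product produces the $A$-module map $\lambda \colon M \otimes_A L \to N$. Decomposing the Leibniz identity for the Lie--Rinehart bracket across these summands then delivers the displayed compatibility $[a \cdot n, \xi] = a \cdot [n, \xi] - \rho_0(\xi)(a) \cdot n$ together with the standard Leibniz identity on $L$. The main obstacle will be bookkeeping: because the tensor product in $\mathcal{LM}$ has both diagonal and off-diagonal components, each internal axiom splits into several component equations, and the real work is to collect these and recognise the resulting list as exactly the conditions stated in the theorem.
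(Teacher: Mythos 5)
Your proposal is correct and follows essentially the same route as the paper: it unpacks the internal Lie--Rinehart axioms componentwise via the universal derivations object $\big(\mathrm{Der}_R(A,M) \rightarrow \mathrm{Der}_R(A) \oplus \mathrm{Hom}_R(M,M)\big)$ (the paper's Proposition~\ref{universal Der LM}), reads the anchor as a morphism of Lie algebra objects giving $\rho_0$, $\rho_1$, $\rho_2$ and their compatibilities (Proposition~\ref{der action}), and extracts $\lambda$ and the $A$-linearity of $f$ from the off-diagonal summand of the $( M \overset{g}{\rightarrow} A)$-module structure (Proposition~\ref{left module}). The only difference is one of presentation: the paper packages these decompositions as separately proved propositions and then cites them, whereas you carry out the same bookkeeping inline.
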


Our second result consists in the construction of a functor from the category of Lie--Rinehart algebra objects in~$\mathcal{LM}$ to the category of Leibniz algebroids.

\begin{Theorem}
\label{thm2}
For a Lie--Rinehart algebra $\big( (M \overset{ g}{\rightarrow} A), ( N \overset{ f}{\rightarrow} L) \big)$ in  $\mathcal{LM}$, the pair $( A, M \oplus N) $ is a Leibniz algebroid with anchor
\begin{gather}
\label{equiv rhoN}
\rho_{M \oplus N} := -   \rho_0 \circ  f \colon \  M \oplus N  \longrightarrow \mathrm{Der}_R(A)
\end{gather}
and Leibniz bracket on the $A$-module $M \oplus N$ given by
\begin{gather}
\label{eqNbracket}
[m_1 + n_1 , m_2 + n_2]_{M \oplus N} := -  \rho_2(f( n_2)) (m_1) +  [ n_1 , f(n_2)].
\end{gather}
for all $m_1 , m_2 \in M$ and $n_1, n_2 \in N$.
\end{Theorem}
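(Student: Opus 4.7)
The plan is to verify directly the four defining conditions of a Leibniz algebroid on the $A$-module $M \oplus N$: (a) $A$-linearity of $\rho_{M \oplus N}$; (b) $\rho_{M \oplus N}$ is a morphism of Leibniz algebras into the Lie algebra $\mathrm{Der}_R(A)$; (c) the Leibniz (Loday) identity for the bracket on $M \oplus N$; and (d) the Leibniz rule tying the bracket to the $A$-module structure through the anchor. Throughout I write $x = m_1 + n_1$, $y = m_2 + n_2$, $z = m_3 + n_3$ and use the hypotheses of Theorem~\ref{thm1}.

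Condition (a) is immediate: $f$ is $A$-linear by hypothesis, and $\rho_0$ is $A$-linear as the anchor of a Lie--Rinehart algebra, so $\rho_{M \oplus N} = -\rho_0 \circ f$ is $A$-linear; it vanishes on the $M$-summand. For (b), one computes
\begin{gather*}
\rho_{M \oplus N}([x, y]) = -\rho_0 \big( f([n_1, f(n_2)]) \big) = -\rho_0 \big( [f(n_1), f(n_2)] \big) = [\rho_{M \oplus N}(x), \rho_{M \oplus N}(y)],
\end{gather*}
using $L$-equivariance of $f$ and the fact that $\rho_0$ is a Lie algebra morphism.

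For (c), I expand the Leibniz identity with the ansatz above and split the result into its $M$- and $N$-components. The $N$-part reduces to the identity $[n, [\xi_1, \xi_2]] = [[n, \xi_1], \xi_2] - [[n, \xi_2], \xi_1]$, which holds because $N$ is a right $L$-module. The $M$-part reduces, after applying $L$-equivariance of $f$, to $\rho_2([f(n_2), f(n_3)]) = [\rho_2(f(n_2)), \rho_2(f(n_3))]$, which is exactly the statement that $\rho_2$ is a representation of $L$ on $M$. Condition (d) is checked by expanding both sides using $A$-linearity of $f$, the identity $\rho_2(a\xi) = a \cdot \rho_2(\xi)$ coming from the Lie--Rinehart module structure on $M$, and the compatibility $[a \cdot n, \xi] = a \cdot [n, \xi] - \rho_0(\xi)(a) \cdot n$ given in Theorem~\ref{thm1}; the terms $-\rho_0(\xi)(a) \cdot n$ produce the required anchor correction.

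The main obstacle is primarily bookkeeping in condition (c): several summands must cancel on both sides, and the cancellations hinge on all three compatibility axioms (representation of $L$ on $M$, right $L$-module structure of $N$, and $L$-equivariance of $f$) being used together with the correct minus signs built into the anchor and the bracket. The Lie--Rinehart data in $\mathcal{LM}$ is tailored precisely so that these signs line up to produce a single coherent Leibniz algebroid structure on $M \oplus N$.
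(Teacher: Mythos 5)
Your overall strategy is the same as the paper's: direct verification of the defining conditions, with the Leibniz identity for $[-,-]_{M\oplus N}$ checked by hand (the paper outsources this step to Proposition~\ref{hemi2}, but your component-wise reduction to ``$N$ is a right $L$-module'' and ``$\rho_2$ is a representation of $L$ on $M$'' is exactly the content of that proposition, so this is not a genuinely different route). Your conditions (a), (c) and the skeleton of (d) are sound.

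However, step (b) contains a genuine error. Definition~\ref{LBdef} requires the anchor to be a Leibniz algebra \emph{antihomomorphism}, and that is in fact what the computation gives: since $\rho_{M\oplus N}(x)=-\rho_0(f(n_1))$ and $\rho_{M\oplus N}(y)=-\rho_0(f(n_2))$, the two minus signs cancel in the bracket, so
\begin{gather*}
\rho_{M\oplus N}([x,y])=-\rho_0\big([f(n_1),f(n_2)]_L\big)=-[\rho_0(f(n_1)),\rho_0(f(n_2))]=[\rho_{M\oplus N}(y),\rho_{M\oplus N}(x)],
\end{gather*}
which is the \emph{negative} of the right-hand side you wrote. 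As stated, your chain of equalities asserts $-[\rho_0(f(n_1)),\rho_0(f(n_2))]=[\rho_0(f(n_1)),\rho_0(f(n_2))]$, which is false, and moreover you are verifying the wrong property relative to Definition~\ref{LBdef}. The fix is trivial (flip the last equality and conclude ``antihomomorphism''), but as written (b) does not establish what the theorem needs. A smaller inaccuracy occurs in (d): the anchor correction has both an $M$-component and an $N$-component; the $N$-component indeed comes from $[a\cdot n,\xi]=a\cdot[n,\xi]-\rho_0(\xi)(a)\cdot n$, but the $M$-component comes from expanding $\rho_2(f(n_2))(a\cdot m_1)$ via the derivation property $\rho_2(\xi)(a\cdot m)=a\cdot\rho_2(\xi)(m)+\rho_0(\xi)(a)\cdot m$ of \eqref{compDer1}, not from the $A$-linearity $\rho_2(a\xi)=a\cdot\rho_2(\xi)$ that you cite. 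The needed identity is available among the hypotheses, so this is a mis-attribution rather than a missing ingredient, but it should be corrected.
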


Since a very rich class of examples of Hopf algebroids \cite{GabiSzlachanyi, kr1, R2} is the enveloping algebra of Lie--Rinehart algebras, following the argument given by Loday and Pirashvili in~\cite{Loday98}, we would expect a similar relation  between the enveloping algebra of a Leibniz algebroid  and Hopf algebroids in~$\mathcal{LM}$. However this generalisation of~\cite{Loday98} requires not only the def\/inition of a~correct notion of enveloping algebra of a Leibniz algebroid (not yet def\/ined in the literature), but also the construction of a functor from the category of Leibniz algebroids to the category of Lie--Rinehart algebras objects in~$\mathcal{LM}$, which goes beyond the goals of this paper.

\section{Leibniz algebroids}\label{VL}

In this section we f\/irst recall the def\/initions of Leibniz algebras  as given by Loday and Pirashvili~\cite{LodayLeibnizSurvey, LodayLeibnizEnveloping}. Secondly, we discuss Leibniz algebroids, see~\cite{LeonLeibniz} for a dif\/ferential geometric description, and  give some motivating examples.

\subsection{Leibniz algebras}
\label{Leibniz history}

 Leibniz algebras  were f\/irst def\/ined by Blokh \cite{Blokh}, later rediscovered  and more intensively  studied  since \cite{Cuvier,LodayLeibnizEnveloping}.  For motivation, def\/initions and basic examples see \cite{LodayLeibnizSurvey, LodayLeibnizEnveloping}.

\begin{Definition}
A right Leibniz algebra $\mathfrak g $  is an $R$-module equipped with a bilinear map, called the right Leibniz bracket and denoted  by  $ [ - , - ]_{\mathfrak g} \colon  \mathfrak g \otimes \mathfrak g \longrightarrow \mathfrak g $ which
satisf\/ies the  identity
\begin{gather}
\label{RLJ}
[ x , [ y , z ]_{\mathfrak g} ]_{\mathfrak g} - [ [ x , y ]_{\mathfrak g} , z ]_{\mathfrak g} + [ [ x , z ]_{\mathfrak g} , y ]_{\mathfrak g} = 0, \qquad \textnormal{for all $x , y , z \in \mathfrak g$}.
\end{gather}
\end{Definition}

Correspondingly, a \emph{left} Leibniz algebra structure $[-,-]$ on an $R$-module $V$  is def\/ined by  $[x , y ] := [ y , x ]_{\mathfrak g}$ for $x , y \in V$ where $[ - , - ]_{\mathfrak g}$ satisf\/ies \eqref{RLJ}, see~\cite{LodayLeibnizSurvey, LodayLeibnizEnveloping}.

Since Loday and  Pirashvili  use right  Leibniz algebra structures in their work \cite{LodayLeibnizSurvey,LodayLeibnizEnveloping, Loday98}, we  choose right Leibniz algebras as well, which we  call from now on Leibniz algebras.

\begin{Remark}
For a  Leibniz algebra $\mathfrak g$  there exists a corresponding Lie algebra, denoted by $\mathfrak g_{\mathrm{Lie}}$ and called the reduced Lie algebra of $\mathfrak g$, which arises by taking the quotient of $\mathfrak g$ by the Leibniz  ideal generated by elements $[ x , x ]_{\mathfrak g} \in \mathfrak g $ for $x \in \mathfrak g$.
Hence there exists a surjective map
\begin{gather*}
 \pi\colon \ \mathfrak g \longrightarrow \mathfrak g_{\mathrm{Lie}}.
\end{gather*}
\end{Remark}

\begin{Example}[see \cite{KurdianiPirashvili}]
Let $L$  be a Lie algebra over $R$ with bracket $ [ - , - ]_L $. The bracket on the second tensor power of $L$   given by
\begin{gather}
\label{bracket LL}
[ x_1 \otimes y_1 , x_2 \otimes y_2 ]_{L \otimes L} = [ x_1 , [ x_2 , y_2 ]_L ]_L \otimes y_1 + x_1 \otimes [ y_1 , [ x_2 , y_2 ]_L ]_L
\end{gather}
for all $x_1 , x_2 , y_1 , y_2 \in L$  endows $L \otimes  L$  with a Leibniz algebra structure.
\end{Example}

In the following example we reformulate the construction of the  \emph{hemi-semi-direct product} for left Leibniz algebras  introduced by Kinyon and Weinstein in  \cite[Example~2.2]{KinyonWeinstein} and endow  the direct sum of a Lie algebra $L$ and a (left) $L$-module $V$ with a  (right) Leibniz algebra structure.

\begin{Example}
\label{hemisemi}
Let $ L$  be a Lie algebra over $R$  and   $V$ be a  $L$-module with left action $ L \otimes V \rightarrow V$ given by $\xi \otimes a \mapsto \xi ( a ) $ for all $a \in V$ and $\xi \in L$.   The direct sum (of $R$-modules) $V \oplus L$ together with the bracket
\begin{gather*}
\left[ a +  \xi , b + \zeta \right]_{V \oplus L} :=  \zeta (a) - [\xi , \zeta]_L, \qquad a, b \in V, \xi , \zeta \in L
\end{gather*}
becomes a (right)  Leibniz algebra   since the identity  \eqref{RLJ} is satisf\/ied
\begin{gather*}
  [ a + \xi , [ b  + \zeta , c + \gamma]_{V \oplus L} ]_{V \oplus L}\! - [ [ a + \xi , b + \zeta ]_{V \oplus L} , c + \gamma ]_{V \oplus L } + [ [ a + \xi , c + \gamma ]_{V \oplus L} , b + \zeta ]_{V \oplus L}  \\
 \qquad{} = [ a + \xi ,  \gamma ( b ) - [ \zeta, \gamma ]_L ]_{V \oplus L}  - [  \zeta ( a ) - [ \xi , \zeta ]_L , c + \gamma  ]_{V \oplus L } + [ \gamma(a) - [ \xi , \gamma ]_L ,  b + \zeta ]_{V \oplus L} \\
\qquad{} = - [ \zeta , \gamma]_L (a) + [ \xi , [ \zeta , \gamma ]_L ]_L  - \gamma ( \zeta (a) ) - [ [ \xi , \zeta ]_L , \gamma ]_L + \zeta ( \gamma(a) ) + [ [ \xi , \gamma ]_L , \zeta ]_L = 0. \tag*{\qed}
  \end{gather*}
  \renewcommand{\qed}{}
\end{Example}

\begin{Definition}[see \cite{LodayLeibnizSurvey}]
Let $\mathfrak g$ and $\mathfrak g'$ be Leibniz algebras. A map of  Leibniz algebras $\varphi\colon  \mathfrak g \rightarrow \mathfrak g'$  is a homomorphism of $R$-modules satisfying  $ \varphi ([ x , y ]_{\mathfrak g} ) = [ \varphi(x) , \varphi (y) ]_{\mathfrak g'} $ for all $x , y \in \mathfrak g$.
\end{Definition}

\begin{Proposition}
\label{hemi2}
Let $\mathfrak g$ be a Leibniz algebra over $R$ and let $M$ be a left module over its reduced Lie algebra $\mathfrak g_{\mathrm{Lie}}$ with left action  $ \mathfrak g_{\mathrm{Lie}}  \otimes M \rightarrow M$ given by $\pi(g ) \otimes m \mapsto \pi (g) ( m ) $ for all $m \in M$ and $g  \in \mathfrak g$.   The direct sum $($of $R$-modules$)$ $M \oplus \mathfrak g $ together with the bracket
\begin{gather}
\label{hseq2}
\left[ m_1 +  g_1  , m_2 + g_2 \right]_{M \oplus \mathfrak g} := - \pi(g_2) (m_1) + [ g_1 , g_2]_\mathfrak g, \qquad m_1, m_2 \in M, g_1, g_2  \in \mathfrak g
\end{gather}
is a Leibniz algebra.
\end{Proposition}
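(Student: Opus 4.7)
The plan is to verify the right Leibniz identity \eqref{RLJ} directly for the bracket \eqref{hseq2}, using the observations that (i) $\mathfrak g$ itself satisfies \eqref{RLJ}, (ii) the projection $\pi\colon \mathfrak g \to \mathfrak g_{\mathrm{Lie}}$ sends the Leibniz bracket on $\mathfrak g$ to the Lie bracket on $\mathfrak g_{\mathrm{Lie}}$, and (iii) $M$ is a genuine Lie module over $\mathfrak g_{\mathrm{Lie}}$. Conceptually, this proposition is the Leibniz analogue of the hemi-semidirect product in Example~\ref{hemisemi}, where the Lie action of $L$ on $V$ is replaced by the action of $\mathfrak g$ on $M$ that factors through $\pi$.

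Concretely, I would take three elements $m_i + g_i$ ($i=1,2,3$) of $M \oplus \mathfrak g$ and expand each of the three terms in the Leibniz identity by iterated application of \eqref{hseq2}. Grouping the result by its $\mathfrak g$-component and its $M$-component, the $\mathfrak g$-component collapses to
\[
[g_1,[g_2,g_3]_{\mathfrak g}]_{\mathfrak g} - [[g_1,g_2]_{\mathfrak g},g_3]_{\mathfrak g} + [[g_1,g_3]_{\mathfrak g},g_2]_{\mathfrak g},
\]
which vanishes by the Leibniz identity for $\mathfrak g$. The $M$-component collapses to
\[
-\pi([g_2,g_3]_{\mathfrak g})(m_1) + \pi(g_2)\bigl(\pi(g_3)(m_1)\bigr) - \pi(g_3)\bigl(\pi(g_2)(m_1)\bigr),
\]
which vanishes because $\pi([g_2,g_3]_{\mathfrak g}) = [\pi(g_2),\pi(g_3)]_{\mathfrak g_{\mathrm{Lie}}}$ and $M$ is a Lie module over $\mathfrak g_{\mathrm{Lie}}$.

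The calculation is essentially the one already carried out in Example~\ref{hemisemi}, with $L$ replaced by $\mathfrak g$ and the Lie bracket in the $L$-part replaced by the Leibniz bracket in $\mathfrak g$, which is where the Jacobi identity is weakened to \eqref{RLJ}. The only genuinely new input is the compatibility $\pi([-,-]_{\mathfrak g}) = [\pi(-),\pi(-)]_{\mathfrak g_{\mathrm{Lie}}}$, which ensures that the module-theoretic part of the computation still closes up on a Lie action. There is no real obstacle: the proof is routine bookkeeping, and the main thing to be careful about is matching signs between $-\pi(g_2)(m_1)$ appearing in the bracket and the sign conventions in \eqref{RLJ}.
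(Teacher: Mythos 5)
Your proposal is correct and follows essentially the same route as the paper, which simply invokes the computation of Example~\ref{hemisemi} together with the fact that $\pi$ is a map of Leibniz algebras; your explicit split into the $\mathfrak g$-component (vanishing by \eqref{RLJ}) and the $M$-component (vanishing because $\pi([g_2,g_3]_{\mathfrak g})=[\pi(g_2),\pi(g_3)]_{\mathfrak g_{\mathrm{Lie}}}$ acts as a commutator on the Lie module $M$) is exactly that computation written out.
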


\begin{proof}
Since $\pi\colon  \mathfrak g \rightarrow \mathfrak g_{\mathrm{Lie}}$ is a map of Leibniz algebras, a straightforward computation identical to the one carried out in Example \ref{hemisemi} yields that $[ - , - ]_{M \oplus \mathfrak g}$ satisf\/ies  \eqref{RLJ} and is hence a Leibniz bracket.
\end{proof}

\subsection{Leibniz algebroids and related structures}

While Lie algebras can be generalised to Lie--Rinehart algebras \cite{Herz, HuebschmannTerm,RinehartForms} (Lie algebroids \cite{Pradines} in dif\/ferential geometric context), Leibniz algebras \cite{Blokh,LodayLeibnizEnveloping} give rise to dif\/ferent algebraic objects:   Leibniz algebroids, f\/irst def\/ined in a dif\/ferential geometric context in~\cite{LeonLeibniz};  Loday algebroids~\cite{StienonLoday};
 Courant algebroids~\cite{WeinsteinCourant}; Courant--Dorfman algebras, a term coined by Roytenberg~\cite{Roytenberg} to denote a structure encompassing both Courant algebroids~\cite{WeinsteinCourant} and Dorfman algebras.  See~\cite{YKSCourant} for a~description of the  historic development of these structures.

\subsubsection{Leibniz algebroids}

We propose a def\/inition of Leibniz algebroids in purely algebraic terms, following the def\/initions given by Rinehart \cite{RinehartForms} and later by Huebschmann \cite{HuebschmannTerm} for Lie--Rinehart algebras as an algebraic description of Lie algebroids.

\begin{Definition}
\label{LBdef}
Let $A$ be a commutative $R$-algebra and $\mathcal E$ be a Leibniz algebra over $R$ with bracket $ [ - , - ]_{\mathcal E}$. The pair $(A , \mathcal E)$ is  called a \textit{Leibniz algebroid} if the Leibniz algebra  $\mathcal E$ has a left $A$-module structure $\mu\colon  A \otimes \mathcal E \rightarrow \mathcal E$ given by $ a \otimes e \mapsto a \cdot e$ for all $a \in A$ and $e \in \mathcal E$, and there exists an $A$-linear Leibniz algebra antihomomorphism $\rho_{\mathcal E} \colon  \mathcal E \rightarrow \mathrm{Der}_R(A)$, called the anchor, satisfying
\begin{gather}
\label{LBrule}
[ a \cdot e_1 , e_2 ]_{\mathcal E} = a \cdot [ e_1 , e_2]_{\mathcal E} + \rho_{\mathcal E}(e_2) (a) \cdot e_1, \qquad \textnormal{for} \ \ e_1 , e_2 \in \mathcal E, \quad a \in A.
\end{gather}
\end{Definition}

\begin{Example}
A Lie--Rinehart algebra  $(A,L)$, with anchor $\rho_L$, is a Leibniz algebroid with anchor $ - \rho_L$.
\end{Example}

\begin{Proposition}
\label{AAL algebroid}
Let $(A,L)$ be a Lie--Rinehart algebra, with anchor $\rho_L$, and let $M$ be a left $(A,L)$-module with action $L \otimes M \rightarrow M$ given by $\xi \otimes m \mapsto \nabla^\ell_\xi (m)$.  The pair $(A , M \oplus L)$  is a~Leibniz algebroid with anchor
\begin{gather}
\label{LBanchor}
\rho_{M \oplus L} ( m + \xi) := - \rho_L ( \xi)
\end{gather}
 and bracket on the direct sum $M \oplus L$   given by the $($negative$)$ hemi-semi-direct product  $[ - , - ]_{M \oplus L}$ of $M$ by $L$ with action $L \otimes M \rightarrow M$ $($or equivalently $L \rightarrow \mathrm{Hom}_R ( M , M))$ given by $\nabla^\ell$.
\end{Proposition}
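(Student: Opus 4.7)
The plan is to verify each requirement in Definition \ref{LBdef} in turn: that $M \oplus L$ with the stated bracket is a Leibniz algebra, that it carries a compatible left $A$-module structure, that $\rho_{M \oplus L}$ is an $A$-linear Leibniz algebra antihomomorphism into $\mathrm{Der}_R(A)$, and that the derivation rule \eqref{LBrule} holds. The $A$-module structure on $M \oplus L$ is simply the direct sum of the $A$-module structures on $M$ and $L$, so that step is immediate. Written out, the bracket reads
\[
[m_1 + \xi_1,\, m_2 + \xi_2]_{M \oplus L} = -\nabla^\ell_{\xi_2}(m_1) + [\xi_1, \xi_2]_L,
\]
and substituting into \eqref{RLJ} produces a cancellation identical in structure to the one carried out in Example \ref{hemisemi}: the $L$-component vanishes by the Jacobi identity for $[-,-]_L$, while the $M$-component vanishes because $\nabla^\ell_{[\xi, \zeta]_L} = [\nabla^\ell_\xi, \nabla^\ell_\zeta]$, which is part of the hypothesis that $M$ is a left $(A,L)$-module. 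The overall sign change relative to Example \ref{hemisemi} has no effect on the cancellation.

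For the anchor, $A$-linearity of $\rho_{M \oplus L}$ follows from the $A$-linearity of $\rho_L$ together with the $A$-linearity of the projection $M \oplus L \to L$. The antihomomorphism condition unfolds to
\[
-\rho_L\bigl([\xi_1, \xi_2]_L\bigr) = \bigl[-\rho_L(\xi_2),\, -\rho_L(\xi_1)\bigr],
\]
which, after using antisymmetry of the Lie bracket on $\mathrm{Der}_R(A)$, is precisely the statement that $\rho_L$ is a Lie algebra homomorphism, i.e.\ part of the Lie--Rinehart data for $(A, L)$.

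The only step requiring genuine computation is the derivation rule \eqref{LBrule}. Setting $e_i := m_i + \xi_i$ and expanding,
\[
[a \cdot e_1,\, e_2]_{M \oplus L} = -\nabla^\ell_{\xi_2}(a m_1) + [a \xi_1, \xi_2]_L.
\]
Applying the Lie--Rinehart module compatibility $\nabla^\ell_{\xi_2}(a m_1) = a\, \nabla^\ell_{\xi_2}(m_1) + \rho_L(\xi_2)(a)\, m_1$ and the Lie--Rinehart identity $[a \xi_1, \xi_2]_L = a\, [\xi_1, \xi_2]_L - \rho_L(\xi_2)(a)\, \xi_1$, the leading terms reassemble into $a \cdot [e_1, e_2]_{M \oplus L}$, and the two remaining terms combine into $-\rho_L(\xi_2)(a) \cdot (m_1 + \xi_1) = \rho_{M \oplus L}(e_2)(a) \cdot e_1$, yielding exactly \eqref{LBrule}. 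The main (and essentially only) obstacle is keeping the signs straight: each of the three minus signs in the construction---one in the anchor, one in the bracket on $M \oplus L$, and one in the Lie--Rinehart identity for $[a\xi_1, \xi_2]_L$---plays a distinct role, and the proof only closes because they line up correctly.
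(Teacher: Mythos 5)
Your proof is correct and follows essentially the same route as the paper: the Leibniz identity for the bracket is reduced to the hemi-semi-direct product computation (Proposition~\ref{hemi2}/Example~\ref{hemisemi}), and the antihomomorphism property of the anchor and the rule~\eqref{LBrule} are verified by the same direct expansions using the Lie--Rinehart identities for $\rho_L$ and $\nabla^\ell$. Your explicit remarks on the $A$-linearity of the anchor and on the sign-invariance of~\eqref{RLJ} under negating the bracket are minor additions the paper leaves implicit.
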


\begin{proof}
First note that $[ m_1 + \xi , m_2 ]_{M \oplus L} = 0$ for all $m_1 , m_2  \in M$, $\xi \in L$. Now, since $M$ is an $(A,L)$-module with action $\nabla^\ell \colon  L \otimes M \rightarrow M$ given by $m \otimes \xi \mapsto \nabla^\ell_\xi (m)$, we endow the direct sum $M \oplus L$ with  the  Leibniz bracket given in  \eqref{hseq2}, that is
\begin{gather*}
[ m_1 + \xi , m_2 + \zeta ]_{M \oplus L} = - \nabla^\ell_\zeta (m_1) + [ \xi , \zeta ]_L.
\end{gather*}
We now check that the map in \eqref{LBanchor} is an antihomomorphism
\begin{gather*}
\rho_{M \oplus L} ( [ m_1 + \xi , m_2 + \zeta]_{M \oplus L} )    = \rho_{M \oplus L} ( - \nabla^\ell_\zeta (m_1) +   [ \xi , \zeta ]_L)  = - \rho_L ( [ \xi , \zeta]_L )  \\
\hphantom{\rho_{M \oplus L} ( [ m_1 + \xi , m_2 + \zeta]_{M \oplus L} )}{}
   = - [ \rho_L ( \xi ) , \rho_L ( \zeta) ]_{\mathrm{Der}_R(A)} \\
\hphantom{\rho_{M \oplus L} ( [ m_1 + \xi , m_2 + \zeta]_{M \oplus L} )}{}
 =  [ \rho_L ( \zeta ) , \rho_L ( \xi )  ]_{\mathrm{Der}_R(A)}  =  [ \rho_{ M \oplus L} ( \zeta ) , \rho_{ M \oplus L} ( \xi )  ]_{\mathrm{Der}_R(A)} .
\end{gather*}
Lastly, we check that the compatibility condition beween $[ - , - ]_{M \oplus L}$ and the $A$-module structure on $M \oplus L$ given  in \eqref{LBrule} is satisf\/ied
\begin{gather*}
[a \cdot ( m + \xi ) ,  \zeta ]_{M \oplus L}    = - \nabla^\ell_\zeta ( a \cdot m) +  [ a \cdot \xi , \zeta ]_L  \\
\hphantom{[a \cdot ( m + \xi ) ,  \zeta ]_{M \oplus L}    }{}
 =  - a \cdot \nabla^\ell_\zeta(m) - \rho_L ( \zeta)(a) \cdot m +  a \cdot [ \xi , \zeta ]_L - \rho_L ( \zeta)  ( a ) \cdot \xi \\
\hphantom{[a \cdot ( m + \xi ) ,  \zeta ]_{M \oplus L}   }{}
 =  a \cdot ( - \nabla^\ell_\zeta(m) + [ \xi , \zeta ]_L ) - \rho_L (  \zeta ) ( a ) \cdot ( m + \xi ) \\
\hphantom{[a \cdot ( m + \xi ) ,  \zeta ]_{M \oplus L}  }{}
 = a \cdot [m + \xi , \zeta ]_{M \oplus L} + \rho_{M \oplus L} ( \zeta) ( a ) \cdot ( m + \xi ).\tag*{\qed}
  \end{gather*}
  \renewcommand{\qed}{}
\end{proof}

Note that the Leibniz rule for $[ - , - ]_{M \oplus L}$ given by
\begin{gather*}
[m + \xi , a \cdot \zeta ]_{M \oplus L}   = - \nabla^\ell_{a \cdot \zeta } (m) + [ \xi , a \cdot \zeta ]_L
  = - a \cdot \nabla_\zeta^\ell (m) + a \cdot [ \xi , \zeta ]_L  + \rho_L ( \xi ) ( a ) \cdot \zeta \\
\hphantom{[m + \xi , a \cdot \zeta ]_{M \oplus L} }{}
  = a \cdot [ m + \xi , \zeta ]_{M \oplus L } - \rho_{M \oplus L } ( \xi ) ( a ) \cdot \zeta
\end{gather*}
 for all $a \in A$ and $\xi , \zeta \in L$  implies that the Leibniz algebroid $(A , M \oplus L)$ is \emph{local} in the sense of \cite[Def\/inition~3.4]{BaragliaLeibniz}.

In general, the relations between Lie algebras and Leibniz algebras will not induce relations between corresponding Lie--Rinehart algebras and Leibniz algebroids.

\begin{Example}
Let $(A , L)$ be a Lie--Rinehart algebra, with anchor $\rho_L$. The pair $(A , L \otimes L)$ where $L \otimes L$ is the Leibniz algebra with bracket  given by~\eqref{bracket LL}
 will not be  a Leibniz algebroid in general. Since $[ - , - ]_L$ satisf\/ies the Leibniz rule~\eqref{LBrule}, we have
\begin{gather*}
[ a \cdot x_1 \otimes y_1 , x_2 \otimes y_2 ]_{ L \otimes L}   =
 [ a \cdot x_1 , [ x_2 , y_2 ]_L ]_L \otimes y_1 + a \cdot x_1 \otimes [ y_1 , [ x_2 , y_2 ]_L ]_L  \\
\qquad{} = a \cdot [ x_1 , [ x_2 , y_2 ]_L ]_L \otimes y_1   - \rho_L \left( [ x_2 , y_2 ]_L \right) ( a ) \cdot x_1 \otimes y_1
+ a \cdot x_1 \otimes [ y_1 , [ x_2 , y_2 ]_L ]_L \\
\qquad{} =  a \cdot \left(  [ x_1 , [ x_2 , y_2 ]_L ]_L \otimes y_1 +  x_1 \otimes [ y_1 , [ x_2 , y_2 ]_L ]_L \right)
  - \rho_L \left( [ x_2 , y_2 ]_L \right) ( a ) \cdot x_1 \otimes y_1  \\
\qquad{} = a \cdot [ x_1 \otimes y_1 , x_2 \otimes y_2 ]_{L \otimes L} - \rho_L ( [ x_2 , y_2 ] ) (a) \cdot x_1 \otimes y_1,
\end{gather*}
 but the map $\gamma ( x_2 \otimes y_2 ) := \rho_L ( [ x_2 , y_2 ]_L)$ is not $A$-linear since $[ - , - ]_L$ is not, hence the pair $(A , L \otimes L)$ does not admit an anchor map induced by~$\rho_L$.
\end{Example}

From \cite{Loday98} we know that to each Leibniz algebra $\mathfrak g$ we can canonically associate a Lie algebra~$\mathfrak g_{\mathrm{Lie}}$ by taking the quotient of $\mathfrak g$ by the two-sided ideal $[ x , x ] $ for $x \in \mathfrak g$. This  relation does not generalise to a canonical relation between  Leibniz algebroids and Lie--Rinehart algebras, so that given a Leibniz algebroid $(A , \mathcal E)$, the reduced Lie algebra $\mathcal E_{\mathrm{Lie}} $ will not be compatible in general with $A$.

\begin{Proposition}
Let $(A, \mathcal E)$ be a Leibniz algebroid  with anchor $\rho_{\mathcal E}$. If $\mathrm{Ker}( \pi)$ is an $A$-sub\-module of $\mathcal E$, then the pair $(A, \mathcal E_{\mathrm{Lie}})$ is a Lie--Rinehart algebra with anchor denoted by $\rho_{\mathcal E_{\mathrm{Lie}}}$ and given by $- \rho_{\mathcal E}$.
 \end{Proposition}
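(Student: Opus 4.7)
The plan is to verify, in order, that $\mathcal{E}_{\mathrm{Lie}}$ inherits an $A$-module structure, that $\rho_{\mathcal{E}}$ descends to a well-defined map on the quotient, and that the Lie--Rinehart compatibility axioms follow from the Leibniz algebroid axioms after the sign flip on the anchor. The Lie bracket on $\mathcal{E}_{\mathrm{Lie}}$ is furnished directly by the Loday--Pirashvili construction, and the hypothesis that $\mathrm{Ker}(\pi)$ is an $A$-submodule of $\mathcal{E}$ is exactly what is needed for the $A$-action on $\mathcal{E}$ to descend to $\mathcal{E}_{\mathrm{Lie}}=\mathcal{E}/\mathrm{Ker}(\pi)$.

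Next I would show that $\rho_{\mathcal{E}}$ factors through $\pi$ to a map $\bar\rho_{\mathcal{E}}\colon\mathcal{E}_{\mathrm{Lie}}\to\mathrm{Der}_R(A)$. Since $\mathrm{Der}_R(A)$ is a Lie algebra and $\rho_{\mathcal{E}}$ is a Leibniz algebra antihomomorphism, the squares $[x,x]_{\mathcal{E}}$ map to $-[\rho_{\mathcal{E}}(x),\rho_{\mathcal{E}}(x)]=0$; applying the antihomomorphism property, $\rho_{\mathcal{E}}$ also vanishes on any Leibniz bracket involving such a square, and $A$-linearity of $\rho_{\mathcal{E}}$ handles $A$-multiples. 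Hence $\rho_{\mathcal{E}}$ annihilates the entire ideal $\mathrm{Ker}(\pi)$, and $\bar\rho_{\mathcal{E}}$ is a well-defined $A$-linear Leibniz antihomomorphism. Since $\mathcal{E}_{\mathrm{Lie}}$ and $\mathrm{Der}_R(A)$ are both Lie algebras, the negative $-\bar\rho_{\mathcal{E}}$ is then a Lie algebra homomorphism, which is precisely the claimed anchor $\rho_{\mathcal{E}_{\mathrm{Lie}}}$.

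Finally, I would derive the Lie--Rinehart compatibility by passing the Leibniz algebroid rule \eqref{LBrule} to the quotient: in $\mathcal{E}_{\mathrm{Lie}}$ one obtains $[a\bar e_1,\bar e_2]_{\mathrm{Lie}}=a[\bar e_1,\bar e_2]_{\mathrm{Lie}}+\bar\rho_{\mathcal{E}}(\bar e_2)(a)\bar e_1$. Substituting $\rho_{\mathcal{E}_{\mathrm{Lie}}}=-\bar\rho_{\mathcal{E}}$ and invoking the antisymmetry of the Lie bracket on $\mathcal{E}_{\mathrm{Lie}}$ converts this identity to the standard form $[\bar e_1,a\bar e_2]_{\mathrm{Lie}}=a[\bar e_1,\bar e_2]_{\mathrm{Lie}}+\rho_{\mathcal{E}_{\mathrm{Lie}}}(\bar e_1)(a)\bar e_2$, completing the verification.

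The main obstacle is the second step: one must be careful that $\rho_{\mathcal{E}}$ vanishes not merely on the generating squares $[x,x]_{\mathcal{E}}$ but on the full two-sided Leibniz ideal they generate. This is where the antihomomorphism property and $A$-linearity of the anchor, together with the hypothesis that this ideal is closed under the $A$-action, combine to close the argument; the rest is essentially bookkeeping between the sign conventions of Leibniz algebroids and of Lie--Rinehart algebras.
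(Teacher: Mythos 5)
Your proposal is correct and follows essentially the same route as the paper: show the anchor kills the squares $[e,e]_{\mathcal E}$ (hence the ideal $\mathrm{Ker}(\pi)$) so it descends to the quotient, use the hypothesis that $\mathrm{Ker}(\pi)$ is an $A$-submodule to get $A$-linearity of the descended map, and negate to turn the Leibniz antihomomorphism into a Lie algebra homomorphism. You are somewhat more thorough than the paper, which does not explicitly check that the anchor vanishes on the full ideal rather than just its generators, nor that the compatibility rule \eqref{LBrule} passes to the quotient in the Lie--Rinehart form; these verifications are welcome but do not change the argument.
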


\begin{proof}
First note that the anchor $\rho_{\mathcal E}$ descends to an $R$-linear map $\gamma_{\mathcal E_{\mathrm{Lie}}}\colon  \mathcal E_{\mathrm{Lie}} \rightarrow \mathrm{Der}_R(A)$ since   $ \rho_{\mathcal E} \left( [ e , e ]_{\mathcal E} \right) = [ \rho_{\mathcal E} (e) , \rho_{\mathcal E} (e)]_{\mathcal E} = 0 $ for all $ e \in \mathcal E$.   Let us  assume that $\pi\colon  \mathcal E \rightarrow \mathcal E_{\mathrm{Lie}}$ is $A$-linear. Then we have
\begin{gather*}
\gamma_{\mathcal E_{\mathrm{Lie}}} ( a \cdot \pi (e) ) = \gamma_{\mathcal E_{\mathrm{Lie}}} \left( \pi ( a \cdot e ) \right) = \rho_{\mathcal E} ( a \cdot e ) = a \cdot \rho_{\mathcal E} (e) = a \cdot \gamma_{\mathcal E_{\mathrm{Lie}}} ( \pi (e) ),
\end{gather*}
so that $\gamma_{\mathcal E_{\mathrm{Lie}}}\colon  \mathcal E_{\mathrm{Lie}} \rightarrow \mathrm{Der}_R (A)$ is $A$-linear. Since $\rho_{\mathcal E}$ is a antihomomorphism while the anchor of a Lie--Rinehart algebra is a homomorphism, we set $\rho_{\mathcal E_{\mathrm{Lie}}} := - \gamma_{\mathcal E_{\mathrm{Lie}}}$ so that  $(A, \mathcal E_{\mathrm{Lie}})$ is a~Lie--Rinehart algebra and the following diagram
\begin{gather*}
  \xymatrix{
 \mathcal E \ar[r]^-{\rho_{\mathcal E}} \ar[d]_\pi & \mathrm{Der}{A}  \\
    \mathcal E_{\mathrm{Lie}}  \ar[ur]_-{- \rho_{\mathcal E_{\mathrm{Lie}}}}  &   }
\end{gather*}
commutes.
\end{proof}

Note that   given a Leibniz algebroid $(A, \mathcal E)$, the $A$-module structure on $\mathcal E$ will  not descend to an $A$-module structure on the reduced Lie algebra $\mathcal E_{\mathrm{Lie}}$ in general.

\begin{Example}
\label{TM T*M}
On a manifold $M$, the bundle $E= TM \oplus T^*M$ has a natural Courant--Dorfman algebra structure with:
\begin{itemize}\itemsep=0pt
\item a bilinear form given by $\langle X + \xi , Y + \zeta \rangle = \iota_X \zeta + \iota_Y \zeta $, where $X , Y \in TM$, $\xi , \zeta \in T^*M$ and $\iota_X \xi$ is the contraction of $X$ with $\xi$,

\item a derivation $d \colon  C^\infty (M) \rightarrow T^*X$ given by the dif\/ferential of a function,

\item a right Leibniz bracket given by
$
[ X + \xi , Y + \zeta ] = [X, Y ] - \mathcal L_X \zeta + \mathcal L_Y \xi + d \left( \iota_X \zeta \right)$ where $[ - , - ]$ is the commutator of vector f\/ields and $\mathcal L$ is the Lie derivative.
\end{itemize}
Note that we have  $[ X + \xi , X + \xi ] = d ( i_X \xi ) $ so that the reduced Lie algebra $E_{\mathrm{Lie}}$ corresponding to $ E= TM \oplus T^*M$ is
\begin{gather*}
E_{\mathrm{Lie}}= TM \oplus T^*M / \langle \textrm{exact forms} \rangle
\end{gather*}
with Lie bracket given by $[ X + \xi , Y + \zeta ] = [X, Y ] - \mathcal L_X \zeta + \mathcal L_Y \xi $. In particular, we see that the  map $\pi\colon  E \rightarrow E_{\mathrm{Lie}}$ is not $C^\infty (M)$-linear.   Note also that while $[ X + \xi , df ] = 0$, we have
\begin{gather*}
[ X + \xi , a \cdot df ] = - \langle X , da \rangle \cdot df + \langle X , df \rangle \cdot da.
\end{gather*}
Note   that the reduced Lie algebra $ E_{\mathrm{Lie}}$ is not an $C^\infty(M)$-module so that $( C^\infty (M) , E_{\mathrm{Lie}} )$ is not a Lie--Rinehart algebra.
\end{Example}

\section[Lie-Rinehart algebras  in the category $\mathcal{LM}$ of linear maps]{Lie--Rinehart algebras  in the category $\boldsymbol{\mathcal{LM}}$ of linear maps}
\label{LRLM}

Lie--Rinehart algebras \cite{Herz,HuebschmannTerm, RinehartForms} (Lie algebroids~\cite{Pradines} in dif\/ferential geometric context) were  introduced by Herz \cite{Herz} under the name \emph{Lie pseudo--algebra} (also known as~\emph{Lie \index{Lie algebroids} algebroid} \cite{Pradines} in a dif\/ferential geometric context) and has been developed and studied as a  generalisation of Lie algebras. The term \emph{Lie--Rinehart algebra} was coined by Huebschmann \cite{HuebschmannTerm}, a term which acknowledges Rinehart's fundamental contributions~\cite{RinehartForms} to the understanding of this structure. See \cite[Section~1]{HuebschmannPCQ} for some historical remarks on this development.

We start this section by giving an overview of the category $\mathcal{LM}$ as def\/ined by Loday and Pirashvili~\cite{Loday98}. In Section~\ref{der LM} we give the necessary tools and background to describe the universal algebra of derivations of an algebra in $\mathcal{LM}$ (see Proposition~\ref{universal Der LM}). Lastly, in Section~\ref{LR in LM}, we describe Lie--Rinehart algebras in the  category~$\mathcal{LM}$ of linear maps.

\subsection[The category $\mathcal{LM}$ of linear maps]{The category $\boldsymbol{\mathcal{LM}}$ of linear maps}

We f\/irst recall some fundamental concepts and  def\/initions about the category $\mathcal{LM}$ of linear maps, introduced by Loday and Pirashvili in~\cite{Loday98}, that are relevant for our main constructions later. We refer to~\cite{Loday98} for further details. See~\cite{UliWagemann} for results on Hopf algebras in~$\mathcal{LM}$.

\begin{Definition}
The objects in the category $\mathcal{LM}$  are $R$-module maps  $(V \overset{ u}{\rightarrow} W) $, where $u$ is called the vertical map. The morphisms  between objects in $\mathcal{LM}$ are pairs of maps $\mathsf h := ( h_1, h_0)$ such that the following diagram commutes:
\begin{gather*}
 \xymatrixcolsep{3.5pc}
 \xymatrixrowsep{2.2pc}
    \xymatrix{
  V \ar[r]^{h_1}    \ar[d]_u & V'  \ar[d]^{u'} \\
        W  \ar[r]^{h_0}  &   W'   }
\end{gather*}
Given two morphisms $\mathsf g := ( g_1 , g_0)$  and $\mathsf h := ( h_1 , h_0)$  in $\mathcal{LM}$, their composition $\mathsf h \circ \mathsf g$
is given by
\begin{gather}
\label{hgcomp}
\mathsf h \circ \mathsf g = ( h_1 , h_0 ) \circ ( g_1 , g_0 ) : = ( h_1 \circ g_1 , h_0 \circ g_0 ).
\end{gather}
\end{Definition}

A morphism $\upphi := (\phi_1 , \phi_0)$ is an isomorphism between objects $(V \overset{ u}{\rightarrow} W)$ and $(V' \overset{u'}{\rightarrow} W')$ if and only if $\phi_1$ and $\phi_2$ are isomorphisms of $R$-modules.

\begin{Proposition}
The category $\mathcal{LM}$ is monoidal where the tensor product of two objects  is
\begin{gather*}
 \big(V \overset{ u}{\rightarrow} W\big)  \otimes \big( V' \overset{u'}{\rightarrow} W'\big) := \big(   V \otimes W' \oplus W \otimes  V'  \xlongrightarrow{u \otimes 1_{W'} + 1_W \otimes u'}   W \otimes W' \big),
\end{gather*}
and the unit object is $( \{ 0 \} \overset{0}{\rightarrow} R)$. Moreover, given two morphisms  $\mathsf g := ( g_1 , g_0)$ and $ \mathsf h := ( h_1 , h_0)$ in $\mathcal{LM}$, their tensor product $\mathsf g \otimes \mathsf h$
  is given by
\begin{gather}
\label{hgtensor}
\mathsf g \otimes \mathsf h = ( g_1 , g_0 ) \otimes ( h_1 , h_0 ) := ( g_1 \otimes h_0 + g_0 \otimes h_1  , g_0 \otimes  h_0 ).
\end{gather}
\end{Proposition}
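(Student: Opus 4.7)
The plan is that $\mathcal{LM}$ is essentially the category of $R$-linear chain complexes of length one, so the monoidal structure is inherited from $R\textrm{-Mod}$ by the standard truncated-tensor construction. I would organise the verification in three steps: functoriality of $\otimes$, existence of unitors, and existence of an associator satisfying the coherence axioms.

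For functoriality, I would first note that the vertical map $u \otimes 1_{W'} + 1_W \otimes u'$ is manifestly $R$-linear, so the formula gives a well-defined object of $\mathcal{LM}$. Given morphisms $\mathsf g = (g_1,g_0)\colon (V\to W)\to(V'\to W')$ and $\mathsf h = (h_1,h_0)\colon (V''\to W'')\to(V'''\to W''')$, I would check that the pair in \eqref{hgtensor} makes the tensor-product square commute, i.e.\ that
\begin{gather*}
(g_0 \otimes h_0) \circ (u \otimes 1_{W''} + 1_W \otimes u'') = (u' \otimes 1_{W'''} + 1_{W'} \otimes u''') \circ (g_1 \otimes h_0 + g_0 \otimes h_1).
\end{gather*}
Expanding both sides and using the commuting squares $u' g_1 = g_0 u$ and $u''' h_1 = h_0 u''$ (which express that $\mathsf g$ and $\mathsf h$ lie in $\mathcal{LM}$) gives the identity. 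Compatibility of $\otimes$ on morphisms with the composition rule \eqref{hgcomp} and with identities is then immediate, so $\otimes\colon \mathcal{LM}\times\mathcal{LM}\to\mathcal{LM}$ is a bifunctor.

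For the unit, observe that $(\{0\}\overset{0}{\to} R) \otimes (V\overset{u}{\to} W)$ unfolds to $(\{0\}\otimes W \oplus R\otimes V \to R\otimes W)$, with vertical map $0\oplus (1_R\otimes u)$, and this is canonically isomorphic to $(V\overset{u}{\to} W)$ via the left unitor of $R\textrm{-Mod}$ applied componentwise; the right unitor is analogous and naturality is inherited from $R\textrm{-Mod}$. For the associator, I would compute that both iterated products $\bigl((V\to W)\otimes(V'\to W')\bigr)\otimes(V''\to W'')$ and $(V\to W)\otimes\bigl((V'\to W')\otimes(V''\to W'')\bigr)$, once the inner tensor products are expanded and the $R$-module associators applied on each summand, reduce to the same truncated complex
\begin{gather*}
\bigl(V\otimes W'\otimes W'' \oplus W\otimes V'\otimes W'' \oplus W\otimes W'\otimes V'' \longrightarrow W\otimes W'\otimes W''\bigr),
\end{gather*}
with vertical map the symmetric sum $u\otimes 1\otimes 1 + 1\otimes u'\otimes 1 + 1\otimes 1\otimes u''$. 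The componentwise associator of $R\textrm{-Mod}$ thus descends to an isomorphism in $\mathcal{LM}$, and naturality is automatic.

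The only non-trivial step is Step 3, but the difficulty there is purely bookkeeping: one must carefully match the direct-sum decompositions on the two sides of the associator and verify that in each iterated tensor product the vertical map is the expected symmetric sum. Once this is done, the pentagon and triangle axioms reduce summand-by-summand to the corresponding axioms in the symmetric monoidal category of $R$-modules, completing the proof.
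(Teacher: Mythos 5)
Your verification is correct. Note that the paper itself gives no argument here --- its ``proof'' consists of the citation to Loday and Pirashvili \cite{Loday98} --- so your direct check is exactly what that reference supplies: the key non-formal step is the commutation identity for the tensor product of morphisms, which you reduce correctly to the squares $u'\circ g_1 = g_0\circ u$ and $u'''\circ h_1 = h_0\circ u''$ applied summand-by-summand, and the unitors, associator, and coherence axioms are indeed inherited componentwise from $R$-Mod via the observation that the given $\otimes$ is the length-one truncation of the tensor product of chain complexes.
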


\begin{proof}
See \cite{Loday98}.
\end{proof}

Furthermore, the monoidal category $\mathcal{LM}$ is symmetric, with interchange morphism  denoted by  $\uptau \colon  (V \overset{ u}{\rightarrow} W) \otimes ( V' \overset{u'}{\rightarrow} W')  \rightarrow  ( V' \overset{u'}{\rightarrow} W') \otimes (V \overset{ u}{\rightarrow} W)$ and given by   $\tau_0 \colon  W \otimes W' \rightarrow W' \otimes W$ and $\tau_1 \colon  V \otimes W' \oplus W \otimes V' \rightarrow V' \otimes W \oplus W' \otimes V$, see  \cite{Loday98} for more details.

Commutative diagrams in $\mathcal{LM}$ can be seen as commutative ``cubes" in the category  $R$-Mod.
\begin{Example}
The commutative diagram in $\mathcal{LM}$ given by
\begin{gather*}
\begin{gathered}
 \xymatrixcolsep{2.5pc}
 \xymatrixrowsep{2.2pc}\xymatrix{
  (V \overset{ u}{\rightarrow} W) \ar[r]^{\upmu'}  \ar[d]_{\upvarphi'} & (V' \overset{u'}{\rightarrow} W') \ar[d]^\upvarphi \\
   ( M' \overset{g'}{\rightarrow} A')  \ar[r]^{\upmu} &  (M \overset{ g}{\rightarrow} A)  }
\end{gathered}
 \end{gather*}
corresponds to the commuting ``cube" given by
\begin{gather*}
\begin{gathered}
 \xymatrixcolsep{2.8pc}
 \xymatrixrowsep{2.2pc}
 \xymatrix@!0{
& V \ar@{->}[rr]^-{\mu_1'}  \ar@{->}'[d]^u[dd]
&& V'  \ar@{->}[dd]^{u'} \\
M' \ar@{<-}[ur]^-{ \varphi'_1 } \ar@{->}[rr]^{\mu_1 \ \ \ \ \ \ \ } \ar@{->}[dd]_{g'}
&& M \ar@{<-}[ur]_-{\varphi_1} \ar@{->}'[d]^g [dd] \\
&   W   \ar@{->}[rr]^-{ \ \ \ \ \ \mu_0' }
&& W' \\
A' \ar@{->}[rr]_{\mu_0} \ar@{<-}[ur]^{\varphi_0'}
&& A  \ar@{<-}[ur]_{ \varphi_0}
}
\end{gathered}
\end{gather*}
\end{Example}

We now describe some  of the  fundamental algebraic structures in $\mathcal{LM}$. For further details and proofs see \cite{Loday98}.

\begin{Proposition}\quad

\begin{itemize}\itemsep=0pt

\item
An \textbf{associative algebra object} $ ( M \overset{ g}{\rightarrow} A)$ in $\mathcal{LM}$ is a triple consisting of an associative $R$-algebra  $A$, an $A$-bimodule $M$ and an $A$-bimodule map $g \colon  M \rightarrow A$. Moreover, the algebra object  $(M \overset{ g}{\rightarrow} A)$ is commutative, if and only if the $A$-bimodule $M$ is symmetric and $A$ is commutative.

\item
A \textbf{Lie algebra object} $( N \overset{ f}{\rightarrow} L)$  in $\mathcal{LM}$ is equivalent to a Lie algebra $ L$,  a right $L$-mo\-du\-le~$N$ with  right action $N \otimes L \rightarrow N$ given by $n  \otimes \xi \mapsto [ n, \xi]$ for all $n \in N$ and $\xi \in L$, and an $R$-linear $L$-equivariant  map $f\colon  N \rightarrow L$, i.e., $f ( [ n , \xi ]) = [ f(n) , \xi ]_L$.
\end{itemize}
\end{Proposition}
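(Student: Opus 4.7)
The plan is to apply the general definitions of a (unital, associative) algebra object and of a Lie algebra object in a symmetric monoidal category to the concrete category $\mathcal{LM}$, unpacking all data and axioms using the explicit tensor product, the composition rule \eqref{hgcomp}, the tensor of morphisms \eqref{hgtensor}, and the symmetry $\uptau$ described above.

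For the algebra object, a multiplication morphism $\upmu=(\mu_1,\mu_0)\colon (M\overset{g}{\rightarrow} A)\otimes (M\overset{g}{\rightarrow} A)\rightarrow (M\overset{g}{\rightarrow} A)$ amounts, by the tensor product formula, to $R$-linear maps $\mu_0\colon A\otimes A\rightarrow A$ and $\mu_1\colon M\otimes A\oplus A\otimes M\rightarrow M$ making the defining square commute. I would first expand associativity of $\upmu$ via \eqref{hgcomp}: at the base it yields associativity of $\mu_0$, so $A$ is an associative $R$-algebra; at the top it forces the two components of $\mu_1$ to be compatible with $\mu_0$ and with each other, equivalently assembling them into a left and a right $A$-action on $M$ that commute, i.e., an $A$-bimodule structure. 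The commuting square then reads $g(a\cdot m)=a\cdot g(m)$ and $g(m\cdot a)=g(m)\cdot a$, so $g$ is an $A$-bimodule map. A unit $\upeta=(0,\eta_0)\colon (\{0\}\xrightarrow{0} R)\rightarrow (M\overset{g}{\rightarrow} A)$ provides the unit of $A$. For the commutativity statement, $\upmu\circ\uptau=\upmu$ splits, via $\tau_0$ and $\tau_1$, into $\mu_0$ being commutative and $a\cdot m=m\cdot a$, i.e., $M$ being a symmetric $A$-bimodule; the converse is immediate by reversing these steps.

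For the Lie algebra object, a bracket $[-,-]=([-,-]_1,[-,-]_0)$ decomposes analogously into $[-,-]_0\colon L\otimes L\rightarrow L$ and $[-,-]_1\colon N\otimes L\oplus L\otimes N\rightarrow N$. Antisymmetry of $[-,-]$ with respect to $\uptau$ forces $[-,-]_0$ to be antisymmetric; since $\tau_1$ swaps the two summands of $N\otimes L\oplus L\otimes N$, it identifies the two restrictions of $[-,-]_1$ up to sign, so they are determined by a single map $N\otimes L\rightarrow N$, $n\otimes\xi\mapsto [n,\xi]$. Unpacking the Jacobi identity through \eqref{hgcomp} yields the Jacobi identity for $L$ at the base and the right $L$-module axiom $[n,[\xi,\zeta]_L]=[[n,\xi],\zeta]-[[n,\zeta],\xi]$ at the top. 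Commutativity of the defining square $f\circ[-,-]_1=[-,-]_0\circ(f\otimes 1_L+1_L\otimes f)$, restricted to $N\otimes L$, reads $f([n,\xi])=[f(n),\xi]_L$, i.e., the $L$-equivariance of $f$.

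The main obstacle is the symmetry bookkeeping: one must unwind $\uptau_1$ carefully to see that the two a priori independent $L$-actions on $N$ coming from the summands $N\otimes L$ and $L\otimes N$ collapse into a single right action, and analogously that commutativity of the algebra object forces the bimodule $M$ to be symmetric. Once this is in place, each categorical axiom decouples cleanly into a pair of familiar axioms in $R$-\textup{Mod}, one on the base and one on the top, so the two directions of the claimed equivalences follow by matching data and axioms term by term, with reference to \cite{Loday98} for the details already recorded there.
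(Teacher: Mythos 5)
Your proposal is correct and follows essentially the same route as the source: the paper itself gives no proof here but defers to Loday--Pirashvili (``For further details and proofs see \cite{Loday98}''), and your unpacking --- splitting the multiplication/bracket into its base component on $A\otimes A$ (resp.\ $L\otimes L$) and its top component on $M\otimes A\oplus A\otimes M$ (resp.\ $N\otimes L\oplus L\otimes N$), then reading off associativity/bimodule axioms, the Jacobi identity/right-module axiom, and the commuting square as $A$-linearity of $g$ resp.\ $L$-equivariance of $f$ --- is exactly that argument. You also correctly identify the one delicate point, namely that the symmetry $\uptau_1$ collapses the two summand restrictions of the top-level bracket into a single right action $n\otimes\xi\mapsto[n,\xi]$, so nothing is missing.
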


\begin{Example}
We give the following examples of objects in $\mathcal{LM}$:
\begin{itemize}\itemsep=0pt
\item The  surjective map $\pi \colon  \mathcal E \rightarrow \mathcal E_{\mathrm{Lie}}$ is a Lie algebra object in $\mathcal{LM}$.

\item  Let $I$ be a two-sided ideal in an associative algebra $A$. The identity map $\mathrm{id}\colon  I \rightarrow A$ is an associative algebra in $\mathcal{LM}$.

\item  Let $B$ be the square-zero extension of an associative algebra $A$ by the $A$-module $M$. Then $( M \hookrightarrow B)$ is an algebra object in $\mathcal{LM}$.

\item Let $A$ be a Poisson algebra with bracket $\{ - , - \}$, and let $\Omega^1 (A)$ be the $A$-module of K\"ahler dif\/ferentials over $A$. The pair $( A , \Omega^1(A) )$ is a Lie--Rinehart algebra with anchor map $\rho\colon  \Omega^1 (A) \rightarrow \mathrm{Der}(A)$ given by $da \mapsto \{ a , - \}$ for all $a \in A$ and Lie algebra structure on~$\Omega^1 (A)$ given by $[ da , db ]_{\Omega^1 (A)} := d \{ a , b \}$. The dif\/ferential map $d \colon  A \rightarrow \Omega^1 (A)$ is a Lie algebra object in $\mathcal{LM}$, where $A$ is a right $\Omega^1 (A)$-module with action $A \otimes \Omega^1 (A) \rightarrow A$ given by $a \otimes b \cdot dc \mapsto b \cdot  \{ a , c  \}$ for all $a , b, c  \in A$.

\item Similarly, let  $A$ be a Jacobi algebra with  bracket $\{ - , - \}_J$, and let $\mathcal J^1 (A)$ be its 1-jet space. Then the pair $(A , \mathcal J^1 (A) )$ is a Lie--Rinehart algebra (see \cite{R2} for more details), and the map $j \colon  A \rightarrow \mathcal J^1 (A)$ given by $a \mapsto j^1(a)$ is a Lie algebra object in $\mathcal{LM}$ with right $\mathcal J^1 (A)$-action
on $A$ given by $a \otimes b \cdot j^1 (c) \mapsto b \cdot  \{ a , c  \}_J$ for all $a , b, c  \in A$.
\end{itemize}
\end{Example}

Note that a Lie algebra object  $( N \overset{ f}{\rightarrow} L)$  in $\mathcal{LM}$ is a very similar object to a strict 2-term~$L_\infty$ algebra.

We now focus on the description of   $( M \overset{g}{\rightarrow} A)$-modules in $\mathcal{LM}$: 

\begin{Proposition}
\label{left module}
   A \textbf{left $( M \overset{ g}{\rightarrow} A)$-module object}  is  a  map $(V \overset{ u}{\rightarrow} W)$ of  left $A$-modules such that there exists an $A$-module map $\mu_1 \colon  W \otimes M \rightarrow V$ satisfying $g \circ  \mu_1 ( m \otimes w ) = \mu_0 ( g(m) \otimes w ) $ for $w \in W $ and $m \in M$, which descends to an $A$-module map  $ \alpha_\ell^V\colon  M \otimes_A W \rightarrow V $ $($called \textbf{structure map} of the left $( M \overset{g}{\rightarrow} A)$-module $(V \overset{ u}{\rightarrow} W))$  satisfying
\begin{gather}
\label{alpha ell}
 u \circ \alpha_\ell^V ( m \otimes_A w ) =   \mu_0 \circ ( g(m)  \otimes_A w  ) .
\end{gather}
\end{Proposition}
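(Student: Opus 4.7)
The plan is to unpack the standard monoidal-category notion of a module over a monoid in the particular case of $\mathcal{LM}$. By definition, a left module in $\mathcal{LM}$ over the algebra object $(M\overset{g}{\rightarrow}A)$ is an object $(V\overset{u}{\rightarrow}W)$ together with an action morphism
$$
\upmu=(\mu_1,\mu_0)\colon (M\overset{g}{\rightarrow}A)\otimes (V\overset{u}{\rightarrow}W)\longrightarrow (V\overset{u}{\rightarrow}W)
$$
satisfying the usual associativity and unit axioms. Using the tensor-product formula~\eqref{hgtensor} in $\mathcal{LM}$, the source of $\upmu$ is the object $(M\otimes W\oplus A\otimes V\longrightarrow A\otimes W)$ with vertical map $g\otimes 1_W+1_A\otimes u$, so $\mu_0$ is an $R$-linear map $A\otimes W\to W$ and $\mu_1$ decomposes as a sum of components $\mu_1^{A,V}\colon A\otimes V\to V$ and $\mu_1^{M,W}\colon M\otimes W\to V$.

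First I would apply the unit and associativity axioms to the components $\mu_0$ and $\mu_1^{A,V}$; together they force $\mu_0$ to equip $W$ with a left $A$-module structure and $\mu_1^{A,V}$ to equip $V$ with a left $A$-module structure. The commutativity of the defining square, $u\circ \mu_1=\mu_0\circ (g\otimes 1_W+1_A\otimes u)$, then splits into the identity $u(a\cdot v)=a\cdot u(v)$ (so that $u$ is $A$-linear) and into
$$
u\circ \mu_1^{M,W}(m\otimes w)=\mu_0(g(m)\otimes w),
$$
which is precisely the displayed compatibility of the statement (with the evident typo $g\mapsto u$ on the left).

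Next I would derive the remaining properties of $\mu_1^{M,W}$ from the associativity axiom for $\upmu$ together with the $A$-bimodule-algebra structure of $(M\overset{g}{\rightarrow}A)$. Tracking associativity along the two mixed summands $A\otimes M\otimes W$ and $M\otimes A\otimes W$ of the threefold tensor $(M\overset{g}{\rightarrow}A)\otimes(M\overset{g}{\rightarrow}A)\otimes(V\overset{u}{\rightarrow}W)$ should yield
$$
\mu_1^{M,W}(a\cdot m\otimes w)=a\cdot \mu_1^{M,W}(m\otimes w)=\mu_1^{M,W}(m\otimes a\cdot w),
$$
which simultaneously shows that $\mu_1^{M,W}$ is an $A$-module map and that it is balanced over $A$. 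Consequently it descends to an $A$-module map $\alpha_\ell^V\colon M\otimes_A W\to V$, and the relation~\eqref{alpha ell} is inherited from the commuting square established in the previous step.

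The only real difficulty is bookkeeping: one must extract the summands of $\mu_1$ correctly from the tensor-product formula~\eqref{hgtensor} and keep track of which composition in the associativity constraint lands on which direct summand of the iterated tensor product in $\mathcal{LM}$. Once the components are cleanly isolated, each verification reduces to a routine diagram chase in the category of $R$-modules.
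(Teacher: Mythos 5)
Your proposal is correct and follows essentially the same route as the paper: unpack the action morphism $\upmu=(\mu_1,\mu_0)$ via the tensor product formula in $\mathcal{LM}$, read off the left $A$-module structures on $W$ and $V$ and the $A$-linearity of $u$ from the commuting square, and use associativity on the mixed summands to show $\mu_1$ is $A$-balanced and hence descends to $\alpha_\ell^V\colon M\otimes_A W\to V$ satisfying \eqref{alpha ell}. Your observation that the displayed compatibility in the statement should read $u\circ\mu_1$ rather than $g\circ\mu_1$ is also consistent with the paper's own diagram.
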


\begin{proof}Since $(V \overset{ u}{\rightarrow} W)$  is a left $( M \overset{g}{\rightarrow} A)$-module, there exists a morphism
\begin{gather*}
\upmu\colon  \ \big ( M \overset{g}{\rightarrow} A\big) \otimes \big(V \overset{ u}{\rightarrow} W\big) \longrightarrow \big(V \overset{ u}{\rightarrow} W\big),
\end{gather*}
that is, a commuting square
\begin{gather}
\label{module square 1}
\begin{gathered}
 \xymatrixcolsep{3.5pc}
 \xymatrixrowsep{2.5pc}
    \xymatrix{
M \otimes W \oplus A \otimes V  \ar[r]^-{\mu_1}    \ar[d]_{g \otimes 1_W + 1_A \otimes u} & V  \ar[d]^{u} \\
       A \otimes W  \ar[r]^{\mu_0}  &  W  }
\end{gathered}
\end{gather}
where $\mu_1$ and $\mu_0$ satisfy some associativity conditions.  From \eqref{module square 1} we see that  $\mu_0 $ and  the restriction of $\mu_1$ to $A \otimes V$ turn $W$ and $V$ respectively into left $A$-modules, so that the vertical map $u$ becomes a map of left $A$-modules. Now, since $M$ is an $A$-bimodule and $W$ is a left $A$-module, we can construct the tensor product $M \otimes_A W$. Moreover, by the associativity of the module action $\upmu$,    we deduce that  $\mu_1$ vanishes on $m \cdot a \otimes w - m \otimes a \cdot w$ where $m \in M$, $a \in A$, $w \in W$ so that the map  $ \mu_1 \colon  M \otimes W \rightarrow V $  descends to a map $ \alpha_\ell^V\colon  M \otimes_A W \rightarrow V $ yielding the following  diagram:
\begin{gather}
\label{module square 2}
\begin{gathered}
 \xymatrixcolsep{3.5pc}
 \xymatrixrowsep{2.5pc}
    \xymatrix{
M \otimes_A W  \ar[r]^-{\alpha_\ell^V}    \ar[d]_{g \otimes_A 1_W } & V  \ar[d]^{u} \\
       A \otimes_A W  \ar[r]^{\mu_0}  &  W  }
\end{gathered}
\end{gather}
The commutativity of \eqref{module square 2} ensures that  the compatibility relation \eqref{alpha ell} is satisf\/ied.
\end{proof}

\begin{Proposition}
For any Lie algebra object $( N \overset{ f}{\rightarrow} L)$ endowed with an $( M \overset{g}{\rightarrow} A)$-module structure, the Leibniz bracket on $N$ given  by
$  [ n_1 , n_2 ]_N := [ n_1 , f(n_2) ]$
satisfies
\begin{gather}
\label{extra}
[ n_1 , a \cdot [ n_2 , n_2 ]_N ]_N = 0, \qquad \forall \, a \in A.
\end{gather}
\end{Proposition}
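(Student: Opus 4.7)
The plan is to unfold the bracket on $N$ using the definition $[n_1,n_2]_N := [n_1,f(n_2)]$ and then peel off the outer arguments using, in turn, the $A$-linearity of $f$, the $L$-equivariance of $f$, and the skew-symmetry of the Lie bracket on $L$. Everything is an immediate consequence of the hypotheses that $( N \overset{f}{\rightarrow} L)$ is a Lie algebra object in $\mathcal{LM}$ endowed with a left $( M \overset{g}{\rightarrow} A)$-module structure, so there is no deep obstruction; the work is purely bookkeeping.

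Concretely, I would first write
\begin{gather*}
[ n_1 , a \cdot [ n_2 , n_2 ]_N ]_N = [ n_1 , f ( a \cdot [ n_2 , f(n_2) ] ) ].
\end{gather*}
Since $( N \overset{f}{\rightarrow} L)$ is an object of $\mathcal{LM}$ carrying a left $(M \overset{g}{\rightarrow} A)$-module structure, the vertical map $f$ is a map of left $A$-modules (this is part of Proposition~\ref{left module}), hence $f(a \cdot x) = a \cdot f(x)$ for all $a \in A$, $x \in N$.

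Next, since $f$ is $L$-equivariant, $f([ n , \xi]) = [ f(n) , \xi ]_L$ for every $n \in N$, $\xi \in L$, and in particular
\begin{gather*}
f ( [ n_2 , f(n_2) ] ) = [ f(n_2) , f(n_2) ]_L = 0,
\end{gather*}
the last equality following from the skew-symmetry of the Lie bracket on $L$. Combining these two observations gives $f(a \cdot [n_2, f(n_2)]) = a \cdot 0 = 0$, so that $[n_1, a \cdot [n_2,n_2]_N]_N = [n_1, 0] = 0$ by $R$-bilinearity of the right $L$-action on $N$, which is the desired identity~\eqref{extra}.

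The only point that warrants any attention is the placement of the $A$-action: one must remember that $a \cdot [n_2,n_2]_N$ refers to the left $A$-module structure on $N$ induced by the module object structure of $(N \overset{f}{\rightarrow} L)$, and that it is precisely this structure which makes $f$ into an $A$-linear map. Once this is acknowledged, the computation collapses in three lines and requires no further input.
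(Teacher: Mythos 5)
Your proof is correct and follows essentially the same route as the paper: both arguments come down to the observation that $f(a\cdot[n_2,n_2]_N)=a\cdot[f(n_2),f(n_2)]_L=0$, using the $A$-linearity and $L$-equivariance of $f$ together with the skew-symmetry of $[-,-]_L$. The paper merely packages this as ``$[n_2,n_2]_N$ lies in $\mathrm{Ker}(f)$, which is an $A$-submodule, and the bracket vanishes whenever the second argument is in $\mathrm{Ker}(f)$,'' whereas you compute the same vanishing directly.
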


\begin{proof}
By def\/inition, a Lie algebra object $( N \overset{ f}{\rightarrow} L)$ carries a Leibniz algebra structure on the $L$-module $N$ with bracket given by $ [ n_1 , n_2 ]_N := [ n_1 , f(n_2) ]$ so that  $[ n_1 , n_2 ]_N =0 $ for all $n_2 \in \mathrm{Ker}( f)$.     Now, since $( N \overset{ f}{\rightarrow} L)$ is an $( M \overset{g}{\rightarrow} A)$-module, $N$ and $L$ are $A$-modules and the vertical map $f$ is $A$-linear, we deduce that  $\mathrm{Ker} ( f)$ is an $A$-module. Hence $[ - , - ]_N$ satisf\/ies condition \eqref{extra}.
\end{proof}

\subsection[The Lie algebra of derivations in $\mathcal{LM}$]{The Lie algebra of derivations in $\boldsymbol{\mathcal{LM}}$}
\label{der LM}

In this section we describe the Lie algebra of derivations of an associative algebra object in  $\mathcal{LM}$. We start by describing  morphisms between Lie algebra objects  and Lie algebra actions.

\begin{Proposition}
\label{prop algebra map}
An algebra morphism $\upphi \colon  ( M \overset{ g}{\rightarrow} A) \rightarrow ( M' \overset{g'}{\rightarrow} A') $ in $\mathcal{LM}$ is given by a pair of maps $( \phi_1 , \phi_0 )$ satisfying
\begin{gather}
 \phi_1 ( a_1 m_1 + m_2 a_2) = \phi_0 ( a_1) \phi_1 ( m_1) + \phi_1 ( m_2) \phi_0 ( a_1), \label{algebra map 1} \\
 \phi_0 ( a_1 \cdot a_2 ) = \phi_0 ( a_1 ) \cdot \phi_0 ( a_2) \label{algebra map 0}
\end{gather}
for $a_1 , a_2 \in A$ and $m_1 , m_2 \in M$.
\end{Proposition}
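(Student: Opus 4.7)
The plan is to unpack the definition of a morphism between algebra objects in $\mathcal{LM}$ by combining the composition rule \eqref{hgcomp} with the tensor product formula \eqref{hgtensor}. Since $( M \overset{g}{\to} A)$ is an associative algebra object, it is equipped with a multiplication morphism $\upmu = (\mu_1, \mu_0) \colon (M\overset{g}{\to}A) \otimes (M\overset{g}{\to}A) \to (M\overset{g}{\to}A)$, in which $\mu_0 \colon A \otimes A \to A$ is the multiplication of $A$ and $\mu_1 \colon M \otimes A \oplus A \otimes M \to M$ encodes the $A$-bimodule structure of $M$ via $\mu_1(m \otimes a + a'\otimes m') = m\cdot a + a'\cdot m'$. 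Analogously one has $\upmu' = (\mu'_1, \mu'_0)$ on the codomain.

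First I would write down the condition that $\upphi = (\phi_1, \phi_0)$ intertwines the two multiplications, namely
\begin{gather*}
\upphi \circ \upmu = \upmu' \circ (\upphi \otimes \upphi).
\end{gather*}
By the composition formula \eqref{hgcomp}, the left-hand side equals $(\phi_1 \circ \mu_1, \phi_0 \circ \mu_0)$. The tensor product formula \eqref{hgtensor} applied to $\upphi \otimes \upphi$ gives
\begin{gather*}
\upphi \otimes \upphi = (\phi_1 \otimes \phi_0 + \phi_0 \otimes \phi_1,\; \phi_0 \otimes \phi_0),
\end{gather*}
so that, using \eqref{hgcomp} once more, the right-hand side equals $\bigl(\mu'_1 \circ (\phi_1 \otimes \phi_0 + \phi_0 \otimes \phi_1),\; \mu'_0 \circ (\phi_0 \otimes \phi_0)\bigr)$.

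Next I would compare the two sides componentwise. At the base level, equality of the second coordinates reads $\phi_0 \circ \mu_0 = \mu'_0 \circ (\phi_0 \otimes \phi_0)$, which evaluated on $a_1 \otimes a_2$ is exactly \eqref{algebra map 0}. At the top level, equality of the first coordinates reads $\phi_1 \circ \mu_1 = \mu'_1 \circ (\phi_1 \otimes \phi_0 + \phi_0 \otimes \phi_1)$; evaluating on $a_1 \otimes m_1 + m_2 \otimes a_2$ and using the explicit forms of $\mu_1$ and $\mu'_1$ yields \eqref{algebra map 1}. Conversely, any pair $(\phi_1, \phi_0)$ satisfying \eqref{algebra map 1}--\eqref{algebra map 0} reverses the argument to produce a morphism of algebra objects.

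I do not anticipate any genuine obstacle, since the argument is a direct translation of definitions. The only care required is to respect the ordering and the summation pattern in \eqref{hgtensor}, ensuring that the ``index-$1$'' factor is paired with the ``index-$0$'' factor on the correct side so as to produce both the left and the right $A$-action terms appearing in \eqref{algebra map 1}; this is precisely what guarantees that the resulting condition expresses $\phi_1$ as a morphism of $A$-bimodules along $\phi_0$.
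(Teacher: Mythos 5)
Your proposal is correct and follows essentially the same route as the paper: both unpack the intertwining condition $\upphi\circ\upmu=\upmu'\circ(\upphi\otimes\upphi)$ using the composition rule \eqref{hgcomp} and the tensor product formula \eqref{hgtensor}, then read off \eqref{algebra map 0} from the base component and \eqref{algebra map 1} from the top component. If anything, you are slightly more explicit than the paper in writing out the first component of $\upphi\otimes\upphi$ as $\phi_1\otimes\phi_0+\phi_0\otimes\phi_1$, which the paper abbreviates.
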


\begin{proof} Assume $\upphi := (\phi_1 , \phi_0)$ is an algebra morphism (in $\mathcal{LM}$). Then using~\eqref{hgcomp} and~\eqref{hgtensor}, a~straightforward computation shows
\begin{gather*}
0  =    \upphi \circ \upmu - \upmu \circ ( \upphi \otimes \upphi)
 = (\phi_1 \circ \mu_1  - \mu_1 \circ ( \phi_1 \otimes \phi_0 ) , \phi_0 \circ \mu_0 - \mu_0 \circ ( \phi_0 \otimes \phi_0 ) ),
\end{gather*}
that is
\begin{gather}
\label{algebra map commutative}
\phi_1 \circ \mu_1 - \mu_1 \circ ( \phi_1 \otimes \phi_0) = 0, \qquad \phi_0 \circ \mu_0 - \mu_0 \circ ( \phi_0 \otimes \phi_0 ) = 0
\end{gather}
which yield the relations in \eqref{algebra map 1} and \eqref{algebra map 0}.
\end{proof}

\begin{Proposition}
A Lie algebra map $\mathbf{a} \colon  ( N \overset{ f}{\rightarrow} L)\rightarrow ( N' \overset{f'}{\rightarrow} L')$ in $\mathcal{LM}$ is given by a pair of maps $(a_1 , a_0)$ satisfying
\begin{gather}
\label{lie algebra map}
 a_1 ( [ n ,  \xi ]) = [  a_1 (n)  , a_0 ( \xi ) ] , \qquad   a_0 ( [ \xi, \zeta ]_L ) = [ a_0 ( \xi ) , a_0 ( \zeta )]_{L'}
\end{gather}
for $n \in N$ and $\xi , \zeta \in L$.
\end{Proposition}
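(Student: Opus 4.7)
The plan is to mimic the proof of Proposition \ref{prop algebra map} almost verbatim, because the statement is the level-by-level unpacking of what it means for $\mathbf{a}=(a_1,a_0)$ to commute with the Lie bracket morphisms of the Lie algebra objects $(N \overset{f}{\to} L)$ and $(N' \overset{f'}{\to} L')$ under the composition and tensor product formulas \eqref{hgcomp} and \eqref{hgtensor} in $\mathcal{LM}$.

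First I would recall that the Lie bracket on a Lie algebra object $(N\overset{f}{\to} L)$ is a morphism $[-,-]\colon (N\overset{f}{\to} L)\otimes (N\overset{f}{\to} L)\to (N\overset{f}{\to} L)$, so by the definition of the tensor product in $\mathcal{LM}$ it consists of a component $[-,-]_L\colon L\otimes L\to L$ at level $0$ (the Lie bracket of $L$) and a component $N\otimes L\oplus L\otimes N\to N$ at level $1$, which by the symmetry $\uptau$ is determined by the right $L$-action $n\otimes\xi\mapsto [n,\xi]$; the corresponding data for $(N'\overset{f'}{\to} L')$ is denoted in the same way.

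Next I would impose $\mathbf{a}\circ [-,-] = [-,-]'\circ(\mathbf{a}\otimes\mathbf{a})$ and expand both sides using \eqref{hgcomp} and \eqref{hgtensor}, exactly in the pattern of \eqref{algebra map commutative}. At level $0$ this immediately gives
\begin{equation*}
a_0\circ [-,-]_L - [-,-]_{L'}\circ(a_0\otimes a_0)=0,
\end{equation*}
which is the second identity in \eqref{lie algebra map}. At level $1$ the identity reads
\begin{equation*}
a_1\circ [-,-] - [-,-]'\circ(a_1\otimes a_0 + a_0\otimes a_1)=0,
\end{equation*}
and restricting to the summand $N\otimes L$ gives $a_1([n,\xi]) = [a_1(n),a_0(\xi)]$, which is the first identity in \eqref{lie algebra map}; the $L\otimes N$ summand gives the same relation after applying $\uptau$. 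The converse direction is immediate: given a pair $(a_1,a_0)$ satisfying \eqref{lie algebra map}, the two displayed equalities show that $\mathbf{a}$ commutes with the brackets on the nose.

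The only mild subtlety, and what I would treat carefully, is the bookkeeping in the tensor product $(N\overset{f}{\to} L)\otimes(N\overset{f}{\to} L)$: the level-$1$ component is a direct sum $N\otimes L\oplus L\otimes N$, so one must check that the condition on the $L\otimes N$ summand is not independent but follows from the $N\otimes L$ condition via the symmetry $\uptau$ of $\mathcal{LM}$, exactly as in Loday--Pirashvili \cite{Loday98}. Once this identification is in place the proof is purely formal, parallel to the algebra-morphism case.
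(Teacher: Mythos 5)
Your proposal is correct and follows essentially the same route as the paper: the paper's proof is exactly "an identical argument as in the proof of Proposition~\ref{prop algebra map}," expanding $\mathbf a\circ\upmu-\upmu\circ(\mathbf a\otimes\mathbf a)=0$ via \eqref{hgcomp} and \eqref{hgtensor} into the level-$0$ and level-$1$ identities of \eqref{lie algebra map}. Your extra remark that the $L\otimes N$ summand is handled by the symmetry $\uptau$ is a point the paper leaves implicit, but it is consistent with its treatment of Lie algebra objects.
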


\begin{proof}
An identical argument as in the proof for Proposition~\ref{prop algebra map} and abusing notation so that~$\upmu$ is the Lie bracket on $( N \overset{ f}{\rightarrow} L)$ we obtain the relation in~\eqref{algebra map commutative} which, in this case, yields the relations in~\eqref{lie algebra map}.
\end{proof}

\begin{Proposition}
\label{3maps prop}
Given  a Lie algebra object $( N \overset{ f}{\rightarrow} L)$, a {left $( N \overset{ f}{\rightarrow} L)$-module} $($in $\mathcal{LM})$  is an object~$(V \overset{ u}{\rightarrow} W)$  such that $V$ and $W$ are  left $L$-modules with actions given  by
\begin{gather}
\label{3maps}
 \alpha_0\colon  \ L \otimes W \longrightarrow  W, \qquad  \alpha_2 \colon  \  L \otimes V \longrightarrow V,
\end{gather}
and there exists an $R$-linear  map
\begin{gather}
\label{third map}
 \alpha_1 \colon \ N \otimes W \longrightarrow V
\end{gather}
 satisfying the following  compatibility condition
\begin{gather}
\label{compatibility3}
\alpha_1 ( [ n , \xi ] \otimes w ) = \alpha_1 ( n \otimes \alpha_0 ( \xi \otimes w ) ) - \alpha_2 ( \xi \otimes  \alpha_1 ( n \otimes w ) ) .
\end{gather}
Moreover, the following compatibility conditions between $u$, $f$, $\alpha_1$, $\alpha_2$ and $\alpha_3$ are  satisfied
\begin{gather*}
u \circ  \alpha_1  = \alpha_0 \circ ( f \otimes 1_W ) , \qquad u \circ  \alpha_2   =  \alpha \circ ( 1_L \otimes u).
\end{gather*}
\end{Proposition}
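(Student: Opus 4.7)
The plan is to mirror the proof of Proposition~\ref{left module}: unpack the module structure as a morphism in $\mathcal{LM}$ and read off its components.

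By definition, a left $(N \overset{f}{\to} L)$-module structure on $(V \overset{u}{\to} W)$ is a morphism
\begin{equation*}
\upmu \colon \big(N \overset{f}{\to} L\big) \otimes \big(V \overset{u}{\to} W\big) \longrightarrow \big(V \overset{u}{\to} W\big)
\end{equation*}
in $\mathcal{LM}$ satisfying the Lie module axiom internal to this symmetric monoidal category. By the tensor product formula, $\upmu = (\mu_1, \mu_0)$ where $\mu_0 \colon L \otimes W \to W$ and $\mu_1 \colon N \otimes W \oplus L \otimes V \to V$ fit into a square commuting over the vertical maps $u$ and $f \otimes 1_W + 1_L \otimes u$. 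Setting $\alpha_0 := \mu_0$ and splitting $\mu_1$ along the direct sum as $\alpha_1 \oplus \alpha_2$, I obtain the three maps \eqref{3maps}, \eqref{third map}; the commutativity of the square then decomposes into exactly the two stated compatibilities $u \circ \alpha_1 = \alpha_0 \circ (f \otimes 1_W)$ and $u \circ \alpha_2 = \alpha_0 \circ (1_L \otimes u)$, since $\mu_1$ restricts to $\alpha_1$ on $N \otimes W$ and to $\alpha_2$ on $L \otimes V$.

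Next, I would impose the Lie module axiom for $\upmu$, i.e.\ compatibility with the Lie bracket on $(N \overset{f}{\to} L)$. Expanding this identity component by component, the $L \otimes L \otimes W$-part forces $\alpha_0$ to be a left $L$-action on $W$, the $L \otimes L \otimes V$-part forces $\alpha_2$ to be a left $L$-action on $V$, and the mixed $N \otimes L \otimes W$-part --- in which the Lie bracket on $(N \overset{f}{\to} L)$ sends $n \otimes \xi$ to the right action $[n, \xi] \in N$ --- rearranges to exactly the compatibility \eqref{compatibility3}, read as the sign-adjusted Lie action identity $[\xi, n] \cdot w = \xi \cdot (n \cdot w) - n \cdot (\xi \cdot w)$.

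The main obstacle is the bookkeeping forced by the symmetric monoidal structure of $\mathcal{LM}$: the degree-one part of a tensor product in $\mathcal{LM}$ is a direct sum, so the Lie module axiom produces several terms that must each be matched against the correct summand. A point worth verifying explicitly is that no fourth structure map $N \otimes V \to V$ is required, which follows because $N \otimes V$ does not appear as a summand in the source of $\upmu$ (a purely degree-counting observation). Once this bookkeeping is done, the claimed relations follow by direct unwinding of the definitions, in parallel with the proof of Proposition~\ref{left module}.
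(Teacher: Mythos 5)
Your proposal is correct and follows essentially the same route as the paper: both unpack the module structure as a morphism $\upalpha = (\alpha_1 + \alpha_2, \alpha_0)$ in $\mathcal{LM}$, read the two compatibilities with $u$ and $f$ off the commuting square, and then expand the Lie module axiom summand by summand, with the $L\otimes L\otimes W$ and $L\otimes L\otimes V$ components giving the two $L$-actions and the mixed components giving \eqref{compatibility3}. The only cosmetic difference is that the paper carries out the expansion in full via \eqref{hgcomp} and \eqref{hgtensor} and records the resulting commutative diagrams explicitly, whereas you summarize the bookkeeping; your observation that no map $N\otimes V \to V$ can arise is consistent with the paper's source term $N \otimes L \otimes W + L \otimes N \otimes W + L \otimes L \otimes V$.
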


\begin{proof}
Since $(V \overset{ u}{\rightarrow} W)$ is a left $( N \overset{ f}{\rightarrow} L)$-module, we have  a morphism
\begin{gather*}
\upalpha\colon  \ \big( N \overset{ f}{\rightarrow} L\big) \otimes  \big(V \overset{ u}{\rightarrow} W\big) \longrightarrow \big(V \overset{ u}{\rightarrow} W\big)
\end{gather*}
that is, a pair of maps $( \alpha_1 + \alpha_2 , \alpha_0)$  where   $\alpha_0$, $\alpha_1$, $\alpha_2$ are given by the maps in \eqref{3maps} and \eqref{third map}, such that the  diagram commutes
\begin{gather}
\label{comm3maps}
\begin{gathered}
 \xymatrixcolsep{9pc}
\xymatrixrowsep{2.7pc}
 \xymatrix{
( N \overset{ f}{\rightarrow} L) \otimes ( N \overset{ f}{\rightarrow} L)  \otimes (V \overset{ u}{\rightarrow} W)    \ar[r]^-{\mathsf{id} \otimes \upalpha -  ( \mathsf{id} \otimes \upalpha  ) \circ ( \uptau \otimes \mathsf{id})  }    \ar[d]_{ \upmu \otimes \mathsf{id} } & \left(    ( N \overset{ f}{\rightarrow} L) \otimes (V \overset{ u}{\rightarrow} W) \right)^{\oplus 2}      \ar[d]^\upalpha \\
( N \overset{ f}{\rightarrow} L) \otimes (V \overset{ u}{\rightarrow} W) \ar[r]^-{\upalpha}   & (V \overset{ u}{\rightarrow} W) }
\end{gathered}\hspace{-10mm}
\end{gather}
which can be seen as the following diagram in cube shape
\begin{gather*}
\begin{gathered}
 \xymatrixcolsep{8pc}
 \xymatrixrowsep{2.7pc}
 \xymatrix@!0{
& N \otimes L \otimes W + L \otimes N \otimes W + L \otimes L \otimes V \ar@{->}[rr]^-\beta  \ar@{->}'[d][dd]
&& ( N \otimes W + L \otimes V)^{ \oplus 2}  \ar@{->}[dd]^{f \otimes 1_W + 1_L \otimes u} \\
N \otimes W + L \otimes V \ar@{<-}[ur]^-{ \mu_1 \otimes 1_W + \mu_0 \otimes 1_V \ \ \ \  \ } \ar@{-}[rr]^{\ \ \ \ \ \ \ \ \ \ \ \ \ \ \ \ \ \ \ \ \ \ \alpha_1 + \alpha_2} \ar@{->}[dd]_{f \otimes 1_W + 1_L \otimes u}
&& V \ar@{<-}[ur]_-{\alpha_1 + \alpha_2} \ar@{->}'[d]^u[dd] \\
&  L \otimes L \otimes W   \ar@{->}[rr]^-{\mathrm{id}_0 \otimes \alpha_0 - (\mathrm{id}_0 \otimes \alpha_0) \circ (\tau_0 \otimes \mathrm{id}_0) }
&& (  L \otimes W  )^{ \oplus 2}\\
L \otimes W \ar@{->}[rr]_{\alpha_0} \ar@{<-}[ur]^{\mu_0 \otimes 1_W}
&& W  \ar@{<-}[ur]_{\alpha_0}
}
\end{gathered}
\end{gather*}
where
\begin{gather*}
\beta := \mathrm{id}_1 \otimes \alpha_0  -  (\mathrm{id}_0 \otimes \alpha_1  ) \circ ( \tau_1 \otimes  \mathrm{id}_0 )  + \mathrm{id}_0 \otimes \alpha_1\\
  \hphantom{\beta :=}{} -  (  \mathrm{id}_1 \otimes \alpha_0  ) \circ ( \tau_1 \otimes \mathrm{id}_0 ) +    \mathrm{id}_0 \otimes \alpha_2 -  ( \mathrm{id}_0 \otimes \alpha_2  ) \circ ( \tau_0 \otimes \mathrm{id}_1).
\end{gather*}

Using  \eqref{hgcomp} and \eqref{hgtensor}, a long but straightforward computation  shows that the compatibility relation making \eqref{comm3maps} commute can be expressed as
\begin{gather*}
0   = \upalpha \circ (\mathsf{id} \otimes \upalpha   -  ( \mathsf{id} \otimes \upalpha ) \circ ( \uptau \otimes \mathsf{id})  ) - \upalpha \circ (\upmu \otimes \mathsf{id} ) \\
\hphantom{0}{} = \big( \alpha_1 \circ ( \mathrm{id}_1 \otimes \alpha_0 ) - \alpha_2 \circ ( \mathrm{id}_0 \otimes \alpha_1 ) \circ ( \tau_1 \otimes \mathrm{id}_0 )  + \alpha_2 \circ ( \mathrm{id}_0 \otimes \alpha_1 )  \\
\hphantom{0=}{} - \alpha_1 \circ (\mathrm{id}_1 \otimes \alpha_0 ) \circ ( \tau_1 \otimes \mathrm{id}_0 ) + \alpha_2 \circ ( \mathrm{id}_0 \otimes \alpha_2 ) - \alpha_2 \circ ( \mathrm{id}_0 \otimes \alpha_2 ) \circ ( \tau_0 \otimes \mathrm{id}_1 ) , \\
\hphantom{0=}{} \quad \alpha_0 \circ ( \mathrm{id}_0 \otimes \alpha_0 ) - \alpha_0 \circ ( \mathrm{id}_0 \otimes \alpha_0 ) \circ ( \tau_0 \otimes \mathrm{id}_0 )  \big) \\
\hphantom{0=}{} - \big( \alpha_1 \circ ( \mu_1 \otimes \mathrm{id}_0 ) + \alpha_2 \circ ( \mu_0 \otimes \mathrm{id}_1 ) , \alpha_0 \circ (\mu_0 \otimes \mathrm{id}_0 ) \big) \\
\hphantom{0}{} = \big( \alpha_1 \circ ( \mathrm{id}_1 \otimes \alpha_0 ) - \alpha_2 \circ ( \mathrm{id}_0 \otimes \alpha_1 ) \circ ( \tau_1 \otimes \mathrm{id}_0 )  - \alpha_1 \circ ( \mu_1 \otimes \mathrm{id}_0 ) \\
\hphantom{0=}{} + \big( \alpha_2 \circ ( \mathrm{id}_0 \otimes \alpha_1 ) - \alpha_1 \circ ( \mathrm{id}_1 \otimes \alpha_0 ) \circ ( \tau_1 \otimes \mathrm{id}_0 ) - \alpha_1 \circ ( \mu_1 \otimes \mathrm{id}_0 ) \\
\hphantom{0=}{} + \big( \alpha_2 \circ ( \mathrm{id}_0 \otimes \alpha_2 ) - \alpha_2 \circ ( \mathrm{id}_0 \otimes \alpha_2 ) \circ ( \tau_0 \otimes \mathrm{id}_1 ) - \alpha_2 \circ ( \mu_0 \otimes \mathrm{id}_1 ) , \\
\hphantom{0=}{}  \quad \alpha_0 \circ ( \mathrm{id}_0 \otimes \alpha_0 ) - \alpha_0 \circ ( \mathrm{id}_0 \otimes \alpha_0 ) \circ ( \tau_0 \otimes \mathrm{id}_0 )  - \alpha_0 \circ (\mu_0 \otimes \mathrm{id}_0 )  \big),
\end{gather*}
so that the following diagrams commute
\begin{itemize}\itemsep=0pt
 \item A  diagram encoding the $L$-module action $  \alpha_0 \colon  L \otimes W \rightarrow W$
\begin{gather*}
\begin{gathered}
 \xymatrixcolsep{11pc}
\xymatrixrowsep{2.5pc}    \xymatrix{
L \otimes L \otimes W     \ar[r]^-{ \mathrm{id}_0 \otimes \alpha_0 -  ( \mathrm{id}_0 \otimes \alpha_0  ) \circ ( \tau_0 \otimes \mathrm{id}_0)  }    \ar[d]_{ \mu_0 \otimes {1_W} } &   L \otimes W \oplus L \otimes W          \ar[d]^{\alpha_0}  \\
L  \otimes W \ar[r]^-{\alpha_0}   & W }
\end{gathered}
\end{gather*}

\item
A diagram encoding the $L$-module action $\alpha_2\colon  L \otimes V \rightarrow V$
\begin{gather*}
\begin{gathered}
 \xymatrixcolsep{11pc}
\xymatrixrowsep{2.5pc}   \xymatrix{
L  \otimes L \otimes V   \ar[r]^-{ \mathrm{id}_0 \otimes \alpha_2 -  ( \mathrm{id}_0 \otimes \alpha_2  ) \circ ( \tau_0 \otimes \mathrm{id}_1)  }    \ar[d]_{ \mu_0 \otimes 1_V } & L \otimes V \oplus  L \otimes V      \ar[d]^{\alpha_2} \\
 L \otimes V \ar[r]^-{\alpha_2}   & V }
\end{gathered}
\end{gather*}
\item
And lastly
 \begin{gather*}
\begin{gathered}
 \xymatrixcolsep{11pc}
\xymatrixrowsep{2.5pc}  \xymatrix{
N \otimes L \otimes W + L \otimes N \otimes W    \ar[r]^-{\substack{ \mathrm{id}_1 \otimes \alpha_0  -  ( \mathrm{id}_0 \otimes \alpha_1  ) \circ ( \tau_1 \otimes  \mathrm{id}_0 )  \\+  \mathrm{id}_0 \otimes \alpha_1  -  (  \mathrm{id}_1 \otimes \alpha_0  ) \circ ( \tau_1 \otimes \mathrm{id}_0 ) } }    \ar[d]_{ \mu_1 \otimes {1_W}  } & ( N \otimes W \oplus  L \otimes V )^{ \oplus 2}     \ar[d]^{\alpha_1 + \alpha_2} \\
N \otimes W  \ar[r]^-{\alpha_1 }   & V }
\end{gathered}
\end{gather*}
encoding the compatibility relation in \eqref{third map}.\hfill $\qed$
\end{itemize}
\renewcommand{\qed}{}
\end{proof}

By the adjoint functor property of tensor products, the maps in \eqref{3maps} correspond to
\begin{gather*}
\alpha_0 \colon \ L \rightarrow \mathrm{Hom}_R (W, W) , \qquad \alpha_1 \colon  \ N \rightarrow \mathrm{Hom}_R ( W, V), \qquad \alpha_2 \colon  \ L \rightarrow \mathrm{Hom}_R (V , V)
\end{gather*}
that we can describe as the following commutative diagram
\begin{gather}
\label{diag hom}
\begin{gathered}
 \xymatrixcolsep{5pc} \xymatrixrowsep{2.5pc}  \xymatrix{
  N \ar[r]^-{\alpha_1}   \ar[d]_f   &  \mathrm{Hom}_R ( W , V)  \ar[d]^{ g \circ h + h \circ g} \\
         L  \ar[r]^-{\alpha_0 + \alpha_2   }   &  \mathrm{Hom}_R ( W,W) \oplus \mathrm{Hom}_R ( V,V)  }
\end{gathered}
\end{gather}
where $h  \in \mathrm{Hom}_R ( W , V)$.

\begin{Proposition}
\label{der action}
Let $(M \overset{ g}{\rightarrow} A)$ be a commutative  algebra object in $\mathcal{LM}$. A Lie algebra object $( N \overset{ f}{\rightarrow} L)$ is said to \textbf{act on $( M \overset{g}{\rightarrow} A)$ by derivations} if there exist two Lie algebra maps
\begin{gather}
\label{tripleAction}
\rho_0 \colon  \ L  \rightarrow \mathrm{Der}_R(A),\qquad  \rho_2 \colon \ L  \rightarrow H:= ( \mathrm{Hom}_R (M,M) , [ - , - ]_H )
\end{gather}
satisfying the compatibility conditions
\begin{gather}\label{compDer1}
 \rho_2 ( \xi) ( a \cdot m) = a \cdot \rho_2 (\xi)(m) + \rho_0( \xi ) (a) \cdot m , \quad   g \left( \rho_2( \xi ) ( m) \right)  = \rho_0 ( \xi ) (g (m))
\end{gather}
and an $R$-module map
\begin{gather*}
 \rho_1 \colon  \ N  \rightarrow \mathrm{Der}_R (A ,M)
\end{gather*}
satisfying
\begin{gather}
\rho_1 ( [ n , \xi ]) = [\rho_1 (n) , ( \rho_0 + \rho_2 )( \xi ) ], \qquad
  g ( \rho_1 (n)  (a) ) = \rho_0 ( f(n) ) (a) \label{compDer3}
\end{gather}
for all $\xi \in L$, $a \in A$ and $m \in M$.
\end{Proposition}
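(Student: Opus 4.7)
The plan is to reduce the statement to an instance of Proposition \ref{3maps prop} by taking the commutative algebra object $(M \overset{g}{\rightarrow} A)$ as the underlying $(N \overset{f}{\rightarrow} L)$-module, and then superimposing the derivation condition on the action morphism to recover the equations in \eqref{compDer1} and \eqref{compDer3}.

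First, specialising Proposition \ref{3maps prop} with $(V \overset{u}{\rightarrow} W)$ replaced by $(M \overset{g}{\rightarrow} A)$, and passing to the adjoint form as in \eqref{diag hom} under the renaming of $\alpha_0$, $\alpha_1$, $\alpha_2$ as $\rho_0$, $\rho_1$, $\rho_2$, a left module structure of $(N \overset{f}{\rightarrow} L)$ on $(M \overset{g}{\rightarrow} A)$ is equivalent to $R$-linear maps
$$\rho_0\colon L \rightarrow \mathrm{Hom}_R(A,A), \qquad \rho_1\colon N \rightarrow \mathrm{Hom}_R(A,M), \qquad \rho_2\colon L \rightarrow \mathrm{Hom}_R(M,M),$$
together with the following constraints: $\rho_0$ and $\rho_2$ are Lie algebra homomorphisms (this is the content of the two module-action squares produced in the proof of Proposition \ref{3maps prop}); the twisted cocycle identity \eqref{compatibility3}, which in the renamed form reads $\rho_1([n,\xi]) = [\rho_1(n), (\rho_0+\rho_2)(\xi)]$; and the vertical compatibilities $g \circ \rho_1(n) = \rho_0(f(n))$ extracted from the adjoint square \eqref{diag hom}.

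Secondly, the phrase \emph{acts by derivations} is made precise by requiring that the action morphism $\upalpha$ is compatible with the multiplication morphism $\upmu$ of the commutative algebra object $(M \overset{g}{\rightarrow} A)$ via the componentwise Leibniz rule. Unfolding this compatibility in the cube of $\mathcal{LM}$, using the tensor formula \eqref{hgtensor} and the composition rule \eqref{hgcomp}, produces three scalar statements: that $\rho_0(\xi)$ is an $R$-derivation of $A$ (so $\rho_0$ factors through $\mathrm{Der}_R(A)$); that $\rho_2(\xi)$ differentiates the $A$-module structure on $M$ along $\rho_0(\xi)$, which is precisely the first equation of \eqref{compDer1}; and that $\rho_1(n)$ is an $R$-derivation $A \rightarrow M$ (so $\rho_1$ factors through $\mathrm{Der}_R(A,M)$). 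Combined with the vertical compatibilities from the first step, one reads off the remaining equality of \eqref{compDer1} and both equalities of \eqref{compDer3}.

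The main technical obstacle is the componentwise diagram chase that converts the single ``derivation square'' in $\mathcal{LM}$ into three separate scalar Leibniz identities; in particular the mixed component of $\upmu$, which couples $A \otimes M$ with $M \otimes A$, must be untangled carefully to isolate the condition on $\rho_1$ from the conditions on $\rho_0$ and $\rho_2$. Once this decoupling is performed, the Lie-map statements for $\rho_0$ and $\rho_2$ are inherited directly from Proposition \ref{3maps prop}, the identity on $\rho_1$ in \eqref{compDer3} is the cocycle identity \eqref{compatibility3} transferred through the $\otimes$--$\mathrm{Hom}$ adjunction, and the compatibility conditions with $f$ and $g$ follow from Proposition \ref{prop algebra map} applied to the vertical maps of the two objects.
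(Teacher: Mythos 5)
Your proposal is correct and follows essentially the same route as the paper: specialise Proposition~\ref{3maps prop} to the module $(M \overset{g}{\rightarrow} A)$, then unfold the Leibniz compatibility of the action with the multiplication $\upmu$ componentwise via \eqref{hgcomp} and \eqref{hgtensor}, which forces $\rho_0$, $\rho_1$, $\rho_2$ to land in derivations and yields \eqref{compDer1} and \eqref{compDer3}. The only cosmetic slip is your closing attribution: the vertical compatibilities $g(\rho_1(n)(a))=\rho_0(f(n))(a)$ and $g(\rho_2(\xi)(m))=\rho_0(\xi)(g(m))$ come from the module-structure square \eqref{diag hom} (as you correctly state earlier), not from Proposition~\ref{prop algebra map}, which concerns algebra morphisms.
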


\begin{proof}
Let  a Lie algebra $( N \overset{ f}{\rightarrow} L)$ act  on $(M \overset{ g}{\rightarrow} A)$ by derivations, then there exists a left $( N \overset{ f}{\rightarrow} L)$-module structure on $(M \overset{ g}{\rightarrow} A)$, denoted  by $\upvarrho\colon  ( N \overset{ f}{\rightarrow} L) \otimes (M \overset{ g}{\rightarrow} A) \rightarrow (M \overset{ g}{\rightarrow} A)$, which  by Proposition \ref{3maps prop}  endows  $M$ and $A$ with left $L$-actions given by   $ \varrho_0 \colon  L \otimes A \rightarrow A$ and  $\varrho_2\colon   L \otimes M \rightarrow M$ respectively, and induces a map $ \varrho_1 \colon  N \otimes A \rightarrow M$ satisfying \eqref{compatibility3}. Furthermore, the action of $( N \overset{ f}{\rightarrow} L)$ on $(M \overset{ g}{\rightarrow} A)$ by derivations makes  the following  commute
\begin{gather*}
\begin{gathered}
 \xymatrixcolsep{8.5pc}
\xymatrixrowsep{1.9pc}   \xymatrix{
( N \overset{ f}{\rightarrow} L) \otimes (M \overset{ g}{\rightarrow} A)  \otimes (M \overset{ g}{\rightarrow} A)  \ar[r]^-{\upvarrho \otimes \mathsf{id} + ( \mathsf{id} \otimes \upvarrho  ) \circ ( \uptau \otimes \mathsf{id}  )  }    \ar[d]_{ \mathsf{id} \otimes \upmu} & (M \overset{ g}{\rightarrow} A)^{\otimes 2} \oplus (M \overset{ g}{\rightarrow} A)^{\otimes 2}\ar[d]^\upmu \\
( N \overset{ f}{\rightarrow} L) \otimes (M \overset{ g}{\rightarrow} A) \ar[r]^-{\upvarrho}   & (M \overset{ g}{\rightarrow} A) }
\end{gathered}
\end{gather*}
Since $\uprho = ( \varrho_1 + \varrho_2 , \varrho_0 ) $,  by \eqref{hgcomp} and \eqref{hgtensor} we f\/ind
 \begin{gather*}
0  = \upmu \circ \big( \uprho \otimes \mathsf{id} + ( \mathsf{id} \otimes \uprho ) \circ ( \uptau \otimes \mathsf{id} ) \big) - \uprho \circ ( \mathsf{id} \otimes \upmu ) \\
\hphantom{0}{} =  \big( \mu_1 \circ ( \varrho_1 \otimes \mathrm{id}_0 ) + \mu_1 \circ ( \mathrm{id}_0 \otimes  \varrho_1  ) \circ ( \tau_0 \otimes \mathrm{id}_1  ) - \rho_1 \circ ( \mathrm{id}_1 \otimes \mu_0 )+ \mu_1 \circ ( \varrho_2 \otimes \mathrm{id}_0 )   \\
\hphantom{0=}{}       + \mu_1 \circ (\mathrm{id}_1 \otimes \varrho_0 ) \circ ( \tau_1 \otimes  \mathrm{id}_0  )   + \mu_1 \circ ( \varrho_0 \circ \mathrm{id}_1 ) + \mu_1 \circ ( \mathrm{id}_0 \otimes \varrho_2   ) \circ ( \tau_1 \otimes   \mathrm{id}_0 )   \\
\hphantom{0=}{} - \varrho_2 \circ ( \mathrm{id}_1 \otimes \mu_0 )   ,           \mu_0 \circ (  \varrho_0 \otimes \mathrm{id}_0 )  +  \mu_0 \circ ( \mathrm{id}_0 \otimes  \varrho_0 ) \circ (  \tau_0 \otimes \mathrm{id}_0 )   - \varrho_0 \circ ( \mathrm{id}_0 \otimes \mu_0 )  \big),
\end{gather*}
so the following diagrams commute:
\begin{itemize}\itemsep=0pt
\item  a diagram which encodes the universal action of a Lie algebra $L$ on $A$ by derivations
\begin{gather*}
 \xymatrixcolsep{11pc}
\xymatrixrowsep{1.8pc}   \xymatrix{
   L \otimes A \otimes A \ar[r]^-{\varrho_0 \otimes \mathrm{id}_0 + ( \mathrm{id}_0 \otimes \varrho_0  ) \circ (  \tau_0 \otimes \mathrm{id}_0 ) }    \ar[d]_{ 1_L \otimes \mu_0} & ( A \otimes A )^{ \oplus 2 }  \ar[d]^{\mu_0} \\
         L \otimes A \ar[r]^-{\varrho_0}  &A  }
\end{gather*}

\item a diagram encoding the action of $N$ on $A$
\begin{gather*}
  \xymatrixcolsep{11pc}
\xymatrixrowsep{1.8pc}  \xymatrix{
  N \otimes A \otimes A  \ar[r]^-{\varrho_1 \otimes \mathrm{id}_0 + ( \mathrm{id}_1  \otimes  \varrho_1  )  \circ \left( \tau_0 \otimes  \mathrm{id}_0  \right)  }    \ar[d]_{1_N \otimes \mu_0 } & ( M \otimes A \oplus A \otimes M )^{ \oplus 2}  \ar[d]^{\mu_1} \\
         N \otimes A  \ar[r]^-{\varrho_1} &M  }
\end{gather*}

\item and lastly, a commutative diagram encoding the action of $L$ on both $A$ and $M$
\begin{gather*}
\begin{gathered}
  \xymatrixcolsep{11pc}
  \xymatrixrowsep{1.8pc}
  \xymatrix{
    L \otimes M \otimes A \oplus  L \otimes A \otimes M  \ar[r]^-{ \substack{ \varrho_2 \otimes \mathrm{id}_0  + ( \mathrm{id}_1 \otimes \varrho_0) \circ ( \tau_1 \otimes \mathrm{id}_0 ) \\ + \varrho_0 \otimes \mathrm{id}_1  + ( \mathrm{id}_0 \otimes \varrho_2  ) \circ ( \tau_0 \otimes \mathrm{id}_1  )} } \ar[d]_{1_L \otimes \mu_1 } &  ( M \otimes A \oplus A \otimes M )^{ \oplus   2}  \ar[d]^{\mu_1} \\
         L \otimes M \ar[r]^-{\varrho_2}  & M  }
\end{gathered}
\end{gather*}
\end{itemize}
These maps make the following cube commute
\begin{gather*}
\begin{gathered}
 \xymatrixcolsep{8pc}
 \xymatrixrowsep{2.25pc}
\xymatrix@!0{
& N \otimes A \otimes A + L \otimes M \otimes A + L \otimes A \otimes M \ar@{->}[rr]^-\beta  \ar@{->}'[d][dd]
&& ( M \otimes A  \oplus A  \otimes M)^{ \oplus 2}  \ar@{->}[dd]^{g \otimes 1_A + 1_M \otimes g} \\
N \otimes A + L \otimes M \ar@{<-}[ur]^-{ \mu_1 \otimes 1_A + \mu_0 \otimes 1_M \ \ \ \  \ } \ar@{-}[rr]^{\ \ \ \ \ \ \ \ \ \ \ \ \ \ \ \ \ \ \ \ \ \ \varrho_1 + \varrho_2} \ar@{->}[dd]_-{f \otimes 1_A + 1_L \otimes g}
&& M \ar@{<-}[ur]_-{\mu_1} \ar@{->}'[d]^-g[dd] \\
&  L \otimes A \otimes A   \ar@{->}[rr]^-{ \varrho_0 \otimes \mathrm{id}_0 - (\mathrm{id}_0 \otimes \varrho_0) \circ (\tau_0 \otimes  \mathrm{id}_0 ) }
&& (  A \otimes A )^{ \oplus 2}\\
L \otimes A \ar@{->}[rr]_{\varrho_0} \ar@{<-}[ur]^{ 1_L \otimes \mu_0 }
&& A  \ar@{<-}[ur]_{\mu_0}
}
\end{gathered}
\end{gather*}
By the adjoint functor property of tensor products, the maps $\varrho_1$, $\varrho_2$ and $ \varrho_3$ are equivalent to the maps in \eqref{tripleAction}.
\end{proof}

\begin{Proposition}
\label{universal Der LM}
Recall $( M \overset{ f}{\rightarrow} A)$ is a commutative algebra object  and $H = \mathrm{Hom}_R (M ,M)$. Let the $R$-module maps $\pi_1 \colon  \mathrm{Der}_R (A ,M)  \rightarrow \mathrm{Der}_R(A)$, $\pi_2 \colon  \mathrm{Der}_R (A ,M) \rightarrow H  $  be given by   $\pi_1 ( \partial)  := g \circ \partial$ and $\pi_2 ( \partial)  := \partial \circ g$ respectively.  The  universal  \textbf{Lie algebra of derivations} of the   algebra object $(M \overset{ g}{\rightarrow} A)$  is given by
\begin{gather*}
\mathrm{Der}_{\mathcal{LM}} \big( ( M \overset{g}{\rightarrow} A) \big) = \big( \mathrm{Der}_R (A ,M) \xlongrightarrow{\pi_1 + \pi_2}  \mathrm{Der}_R(A) \oplus H  \big),
 \end{gather*}
where  the Lie bracket on $\mathrm{Der}_R(A) \oplus H $ is
\begin{gather*}
[ ( \alpha ,  \beta ) , ( \alpha' , \beta' ) ]_{\mathrm{Der}_R(A) \oplus H }:= \left( [\alpha , \alpha']_{\mathrm{Der}_R(A)} , - [\beta , \beta ' ]_H \right),
\end{gather*}
   and the right  $\mathrm{Der}_R(A) \oplus H $-module structure on $\mathrm{Der}_R (A ,M)$ is given by
\begin{gather}
\label{actionDer}
\partial \otimes ( \alpha , \beta) \longmapsto[  \partial ,  ( \alpha , \beta ) ] := \partial \circ \alpha - \beta \circ \partial .
\end{gather}
\end{Proposition}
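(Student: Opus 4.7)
The plan is to establish the universal property in two stages: first verify that $\mathrm{Der}_{\mathcal{LM}}((M \overset{g}{\to} A))$ is a Lie algebra object in $\mathcal{LM}$ which acts on $(M \overset{g}{\to} A)$ by derivations in the sense of Proposition~\ref{der action}, and then show that any such action factors uniquely through it.

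For the first stage, I would start by checking that $\mathrm{Der}_R(A) \oplus H$ with the prescribed bracket is a Lie algebra; this is immediate since $\mathrm{Der}_R(A) \subseteq \mathrm{End}_R(A)$ is closed under commutators and $H = \mathrm{Hom}_R(M,M)$ carries the commutator bracket (and the overall sign on the $H$-factor is a cosmetic choice, as $-[-,-]_H$ is also a Lie bracket). Next, I would verify that $\mathrm{Der}_R(A,M)$ is a right module over this Lie algebra via \eqref{actionDer}. Well-definedness requires $\partial \circ \alpha - \beta \circ \partial \in \mathrm{Der}_R(A,M)$, which follows from $\partial \in \mathrm{Der}_R(A,M)$, $\alpha \in \mathrm{Der}_R(A)$, and $M$ being a symmetric $A$-bimodule (because $(M \overset{g}{\to} A)$ is a commutative algebra object); the right-module axiom is then a routine expansion of commutators. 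The final check for this stage is the equivariance of $\pi_1 + \pi_2$, namely
\begin{gather*}
(\pi_1 + \pi_2)([\partial, (\alpha, \beta)]) = [ (\pi_1 + \pi_2)(\partial), (\alpha, \beta)]_{\mathrm{Der}_R(A) \oplus H},
\end{gather*}
which unpacks to $g \circ (\partial \circ \alpha - \beta \circ \partial) = [g \circ \partial, \alpha]$ on one component and $(\partial \circ \alpha - \beta \circ \partial) \circ g = -[\partial \circ g, \beta]_H$ on the other. Both follow by direct computation, the second using that $g \circ \partial \in \mathrm{Der}_R(A)$ already belongs to the $A$-bimodule action and that $\partial \circ g$ intertwines appropriately. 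This exhibits $\mathrm{Der}_{\mathcal{LM}}((M \overset{g}{\to} A))$ as a Lie algebra object in $\mathcal{LM}$, and the identity maps on the three pieces $\mathrm{Der}_R(A)$, $H$, and $\mathrm{Der}_R(A,M)$ supply the canonical action by derivations, with all compatibilities of Proposition~\ref{der action} becoming tautologies.

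For the universality stage, suppose $(N \overset{f}{\to} L)$ is a Lie algebra object in $\mathcal{LM}$ acting on $(M \overset{g}{\to} A)$ by derivations, so that we are given maps $\rho_0, \rho_1, \rho_2$ satisfying \eqref{compDer1} and \eqref{compDer3}. I would define the candidate morphism of Lie algebra objects by
\begin{gather*}
a_0 := \rho_0 + \rho_2 \colon L \longrightarrow \mathrm{Der}_R(A) \oplus H, \qquad a_1 := \rho_1 \colon N \longrightarrow \mathrm{Der}_R(A,M).
\end{gather*}
That $a_0$ is a Lie algebra map follows from $\rho_0$ and $\rho_2$ being Lie algebra maps (with matching sign convention on $H$). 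That $(a_1, a_0)$ is a morphism in $\mathcal{LM}$, i.e.\ $(\pi_1 + \pi_2) \circ a_1 = a_0 \circ f$, is exactly the pair of conditions $g \circ \rho_1(n) = \rho_0(f(n))$ and $\rho_1(n) \circ g = \rho_2(f(n))$ extracted from \eqref{compDer1}--\eqref{compDer3}. Finally, $a_0$-equivariance of $a_1$ with respect to the action \eqref{actionDer} is precisely $\rho_1([n,\xi]) = [\rho_1(n), (\rho_0 + \rho_2)(\xi)]$, which is the first relation in \eqref{compDer3}. Uniqueness of $(a_1, a_0)$ is forced because the identifications $a_0 = \rho_0 + \rho_2$ and $a_1 = \rho_1$ are dictated by the requirement that $a_0, a_1$ recover the given action by composition with the identity action on $\mathrm{Der}_{\mathcal{LM}}((M \overset{g}{\to} A))$.

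The main obstacle is bookkeeping: the action data consists of three separate maps $\rho_0$, $\rho_1$, $\rho_2$ landing in three different spaces, and I must keep track of how the two compatibility conditions in \eqref{compDer1} and the two in \eqref{compDer3} correspond respectively to (i) the commutativity of the vertical map $\pi_1 + \pi_2$ with the candidate morphism and (ii) the $a_0$-equivariance needed for $(a_1, a_0)$ to be a map of Lie algebra objects. Once one sets up the correspondence cleanly — noting that $\pi_1$ captures the $\rho_0$-type compatibility and $\pi_2$ captures the $\rho_2$-type compatibility — both stages reduce to routine verifications of the kind already carried out in Proposition~\ref{der action}.
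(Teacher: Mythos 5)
Your overall architecture --- verify that $\big( \mathrm{Der}_R (A ,M) \xlongrightarrow{\pi_1 + \pi_2}  \mathrm{Der}_R(A) \oplus H  \big)$ is a Lie algebra object in $\mathcal{LM}$ and then show that any action by derivations factors through it as $(\rho_1,\rho_0+\rho_2)$ --- is the same as the paper's, and is in fact more complete: the paper's proof only verifies the right-module identity for \eqref{actionDer} and then asserts the universal property by exhibiting the commuting square coming from \eqref{diag hom}. Your explicit treatment of the equivariance of $\pi_1+\pi_2$ and of uniqueness fills in what the paper leaves implicit.

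However, one step you assert does not hold as stated. The claim that $\partial\circ\alpha-\beta\circ\partial\in\mathrm{Der}_R(A,M)$ ``follows from $M$ being a symmetric $A$-bimodule'' is false for arbitrary $\beta\in H=\mathrm{Hom}_R(M,M)$: expanding $(\partial\circ\alpha-\beta\circ\partial)(ab)$ leaves the defect $\alpha(a)\cdot\partial(b)+\partial(a)\cdot\alpha(b)-\big(\beta(a\cdot\partial b)-a\cdot\beta(\partial b)\big)-\big(\beta(\partial (a)\cdot b)-\beta(\partial a)\cdot b\big)$, which vanishes precisely when $\beta(a\cdot m)=a\cdot\beta(m)+\alpha(a)\cdot m$, i.e.\ when $(\alpha,\beta)$ is a compatible derivation pair in the sense of the first identity in \eqref{compDer1}. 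The same compatibility, together with $g\circ\beta=\alpha\circ g$, is also what makes your equivariance computation for $\pi_1+\pi_2$ close up on the first component. So either the codomain must be cut down to the Lie subalgebra of compatible pairs, or the well-definedness claim needs this extra hypothesis; symmetry of the bimodule alone does not suffice. (To be fair, the paper's statement and proof share this lacuna: the proof never checks that the action lands in $\mathrm{Der}_R(A,M)$.) Relatedly, the sign on the $H$-component of the bracket is not purely cosmetic once the module axiom is in play: the computation of $[[\partial,(\alpha,\beta)],(\alpha',\beta')]-[[\partial,(\alpha',\beta')],(\alpha,\beta)]$ produces $\partial\circ[\alpha,\alpha']-[\beta,\beta']_H\circ\partial$, which matches $[\partial,([\alpha,\alpha'],+[\beta,\beta']_H)]$ rather than the bracket with $-[\beta,\beta']_H$ given in the statement; the paper's own proof silently switches to the $+$ convention, and you should fix one convention and check the module axiom against it rather than dismissing the sign as a cosmetic choice.
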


\begin{proof}
We f\/irst check that the action  $[ - , - ] $  in \eqref{actionDer} endows $\mathrm{Der}_R (A ,M)$ with a right Lie algebra module structure over (the Lie algebra) $\mathrm{Der}_R(A) \oplus H $.

On the one hand we have
\begin{gather*}
\left[ \partial ,  [ ( \alpha , \beta ), ( \alpha' , \beta' )]_{\mathrm{Der}_R(A) \oplus H } \right] = \left[  \partial ,  \left( [ \alpha , \alpha' ]_{\mathrm{Der}_R(A)},  [ \beta , \beta' ]_H \right)  \right]\\
  \hphantom{\left[ \partial ,  [ ( \alpha , \beta ), ( \alpha' , \beta' )]_{\mathrm{Der}_R(A) \oplus H } \right] }{}
  = \partial \circ  [ \alpha , \alpha' ]_{\mathrm{Der}_R(A)} - [ \beta , \beta' ]_H \circ \partial.
\end{gather*}
On the other hand we have
\begin{gather*}
\left[ \left[ \partial , ( \alpha ,\beta ) \right] ,  ( \alpha' , \beta') \right]   = [ \left( \partial \circ \alpha - \beta \circ   \partial  \right) ,  ( \alpha' , \beta') ] \\
\hphantom{\left[ \left[ \partial , ( \alpha ,\beta ) \right] ,  ( \alpha' , \beta') \right] }{}
= ( \partial \circ \alpha - \beta \circ \partial ) \circ \alpha' - \beta' \circ ( \partial \circ \alpha - \beta \circ \partial ) \\
\hphantom{\left[ \left[ \partial , ( \alpha ,\beta ) \right] ,  ( \alpha' , \beta') \right] }{}
  = \partial \circ \alpha \circ \alpha' - \beta \circ \partial \circ \alpha' - \beta' \circ \partial \circ \alpha + \beta' \circ \beta \circ \partial,
\end{gather*}
so that
\begin{gather*}
\left[  \left[ \partial , ( \alpha ,\beta ) \right]  ,  ( \alpha' , \beta') \right]  -
\left[ \left[ \partial ,  ( \alpha' ,\beta' ) \right] , ( \alpha , \beta ) \right]
 \\
 \qquad {} = \partial \circ \alpha \circ \alpha' - \beta \circ \partial \circ \alpha' - \beta' \circ \partial \circ \alpha
  + \beta' \circ \beta \circ \partial  - \partial \circ \alpha' \circ \alpha  \\
  \qquad\quad{}
  +  \beta' \circ \partial \circ \alpha +  \beta \circ \partial  \circ \alpha' - \beta \circ \beta'  \circ \partial \\
\qquad{} = \partial \circ [ \alpha , \alpha' ]_{\mathrm{Der}_R(A)}  - [ \beta , \beta' ]_H \circ \partial
  = \left[ \partial , \left( [ \alpha , \alpha' ]_{\mathrm{Der}_R(A)} ,  [ \beta , \beta' ]_H \right) \right] .
\end{gather*}
 This shows that the right action of $\mathrm{Der}_R(A) \oplus H$ on $\mathrm{Der}_R (A ,M)$ is well def\/ined.  Hence, the morphism  $ \upvarrho\colon  ( N \overset{ f}{\rightarrow} L) \rightarrow \mathrm{Der}_{\mathcal{LM}} \big( (M \overset{ g}{\rightarrow} A)  \big)$  is  given by the following diagram, which follows from~\eqref{diag hom}
\begin{gather*}
  \xymatrixcolsep{5pc}
  \xymatrixrowsep{2.5pc}
 \xymatrix{
    N  \ar[r]^-{\rho_1}    \ar[d]_{ f}  & \mathrm{Der}_R (A ,M) \ar[d]^{\pi_1 + \pi_2} \\
         L  \ar[r]^-{\rho_0 + \rho_2 }  & \mathrm{Der}_R(A) \oplus H    }
\end{gather*}
which commutes.
\end{proof}

\begin{Remark}
The morphism $\uprho \colon  ( N \overset{ f}{\rightarrow} L) \rightarrow \mathrm{Der}_{\mathcal{LM}} \big( (M \overset{ g}{\rightarrow} A) \big)$ given by   $ (\rho_1 , \rho_0 + \rho_2 )$ is a~morphism of Lie algebras in $\mathcal{LM}$.
\end{Remark}

\begin{Example}
The universal Lie algebra of derivations of the commutative algebra object $(A \overset{\mathrm{id}}{\rightarrow} A)$ is
\begin{gather*}
\big( \mathrm{Der}_R(A) \overset{ }{\longrightarrow} \mathrm{Der}_R(A) \oplus  \mathrm{Der}_R(A) \big).
\end{gather*}
Then, the action of a Lie algebra object $( N \overset{ f}{\rightarrow} L)$ by derivations on  $(A \overset{\mathrm{id}}{\rightarrow} A)$ is given by
\begin{itemize}\itemsep=0pt
\item a Lie algebra map $ \rho_0 \equiv \rho_2 \colon  L  \rightarrow \mathrm{Der}_R(A)$,
\item an $A$-module map $ \rho_1 \colon  N  \rightarrow \mathrm{Der}_R(A)  $
\end{itemize}
satisfying $\rho_1(n) = \rho_0 ( f(n)) $.
\end{Example}

\subsection[Lie-Rinehart algebra objects in $\mathcal{LM}$]{Lie--Rinehart algebra objects in $\boldsymbol{\mathcal{LM}}$}
\label{LR in LM}

A Lie--Rinehart algebra \cite{HuebschmannTerm, RinehartForms} is an algebraic structure which encompasses a~Lie algebra and a~commutative algebra which act on each other in a way that both actions are compatible. This object can be described in any symmetric monoidal category.

In this Section we focus on the description of  Lie--Rinehart algebra objects \textit{in the cate\-go\-ry~$\mathcal{LM}$ of linear maps}.  Based on  \cite[Lemma~3.6]{Loday98} we give a proof of Theorem~\ref{thm1}.

\begin{proof}[Proof of Theorem \ref{thm1}]
Assume the pair $\big( (M \overset{ g}{\rightarrow} A), ( N \overset{ f}{\rightarrow} L) \big)$ is a Lie--Rinehart algebra object. Then  there exist

\begin{itemize}\itemsep=0pt
\item a left $( M \overset{g}{\rightarrow} A)$-module structure on the Lie algebra object $( N \overset{ f}{\rightarrow} L)$,

\item an action $\uprho$ of the Lie algebra object  $( N \overset{ f}{\rightarrow} L)$ on the commutative algebra object $(M \overset{ g}{\rightarrow} A)$ by derivations,

\end{itemize}
and a compatibility condition between these two actions. We now describe what these structures involve.

Firstly, by Proposition \ref{left module}, we deduce that  a left $(M \overset{ g}{\rightarrow} A)$-module structure on the Lie algebra object $( N \overset{ f}{\rightarrow} L)$  turns the $L$-equivariant map $ f\colon N \rightarrow L$ into an $A$-module map. Also, it yields  an $A$-module map $\lambda := \alpha_\ell^N \colon  M \otimes_A L \rightarrow N$. Moreover the Leibniz  algebra structure on~$N$ given by  $[ - , - ]_N$
must    satisfy   $[ n_1 , a \cdot [ n_2 , n_2 ]_N ]_N = 0$ for all $a \in A$ and  $n_1 , n_2 \in N$.

Secondly, by Proposition~\ref{der action}, an  action~$\uprho$ of the Lie algebra object  $( N \overset{ f}{\rightarrow} L)$ on the commutative algebra $(M \overset{ g}{\rightarrow} A)$ by derivations yields
\begin{itemize}\itemsep=0pt
\item an $A$-linear Lie algebra map $ \rho_0 \colon L \rightarrow \mathrm{Der}_R(A)$,
\item an $A$-module  map   $\rho_1\colon N  \rightarrow \mathrm{Der}_R (A ,M) $,
\item an $A$-linear Lie algebra map $  \rho_2\colon L \rightarrow \mathrm{Hom}_R(M,M)$
\end{itemize}
satisfying conditions \eqref{compDer1}, \eqref{compDer3}, and turns  $g\colon M \rightarrow A$ into an $L$-equivariant map. Lastly,   the following  compatibility conditions between the two module structures are  satisf\/ied:{\samepage
\begin{itemize}\itemsep=0pt
\item the pair $(A,L)$ is a  Lie--Rinehart algebra with anchor $\rho_0$,
\item the  right $L$-action on the $A$-module $N$  satisf\/ies
\begin{gather*}
[ a  \cdot  n , b \cdot \xi ] = a \cdot [ n , b \cdot \xi ] - b \cdot \rho_0 ( \xi ) ( a ) \cdot n,
\end{gather*}
which provides a compatibility relation between the right $L$-action on $N$, given by $[ - , - ]$ and the $A$-module structure on $N$.
\end{itemize}
Note that} $\rho_2$ endows $M$ with is a left $(A,L)$-module structure, see~\cite{HuebschmannPaper1} for further details.
\end{proof}

\begin{Example}
Let $(A ,L)$ be a Lie--Rinehart algebra with anchor $\rho_L \colon  L \rightarrow \mathrm{Der}_R(A)$. The pair $\big( (A \overset{\mathrm{id}}{\rightarrow} A) , ( L \overset{\mathrm{id}}{\rightarrow}  L) \big)$ is a Lie--Rinehart algebra object in $\mathcal{LM}$ with $\rho_0 \equiv \rho_1 \equiv \rho_2 = \rho_L $ and $\lambda \equiv \mathrm{id}$.
\end{Example}

\begin{Example}
The pair $ \big( (M \overset{ g}{\rightarrow} A) , \big( \mathrm{Der}_R (A ,M)  \xlongrightarrow{\pi_1 + \pi_2} \mathrm{Der}_R(A) \oplus H \big)  \big) $ is a Lie--Rinehart algebra  object in $\mathcal{LM}$ with
\begin{gather*}
\rho_0  \colon  \ \mathrm{Der}_R(A) \oplus H \rightarrow \mathrm{Der}_R(A), \qquad  \rho_1  \colon  \ \mathrm{Der}_R (A ,M)  \rightarrow \mathrm{Der}_R (A ,M), \\  \rho_2  \colon  \ \mathrm{Der}_R(A) \oplus H \rightarrow H  .
\end{gather*}
\end{Example}

\section[Leibniz algebroids and Lie-Rinehart algebras in $\mathcal{LM}$]{Leibniz algebroids and Lie--Rinehart algebras in $\boldsymbol{\mathcal{LM}}$}

In this short section we prove Theorem \ref{thm2}. This result provides a functorial relation from  Lie--Rinehart algebras to Leibniz algebroids.

\begin{proof}[Proof of Theorem \ref{thm2}]
First note that $N$ is both a right $L$-module and a left $A$-module. Also, recall from \cite{Loday98} that the right $L$-module $N$ becomes a Leibniz algebra by def\/ining the bracket  $[ n_1 , n_2 ]_N = [ n_1 , f (n_2) ]$. Furthermore, since $M$ is a left $(A,L)$-module, by Proposition~\ref{hemi2} we can endow the $A$-module $M \oplus N$ with a Leibniz algebra structure given by the bracket \eqref{eqNbracket}.
Since $f \colon  N \rightarrow L$ and $\rho_0 \colon L \rightarrow \mathrm{Der}_R (A)$ are $A$-linear maps, the map $\rho_{M \oplus N} = -  \rho_0 \circ f$ is also $A$-linear. We now prove that $\rho_{M \oplus N} $ is a  Leibniz algebra antihomomorphism
\begin{gather*}
\rho_{M \oplus N} ([m_1 + n_1 , m_2 + n_2]_{M \oplus N})   = \rho_{M \oplus N} ( - \rho_2( f(n_2)) (m_1) + [ n_1 , n_2 ]_N)  \\
\qquad {} = - \rho_0 (f ([ n_1 , f(n_2) ] ))
  = - \rho_0 ( [f(n_1) , f(n_2) ]_L)  = [ \rho_0 (f (n_2) ) , \rho_0 (f(n_1)) ]_{\mathrm{Der}_R(A)} \\
\qquad{} = [ \rho_{M \oplus N}  (n_2) , \rho_{M \oplus N} (n_1) ]_{\mathrm{Der}_R(A)},
\end{gather*}
so the relations in \eqref{equiv rhoN} hold. We now prove that the Leibniz rule in \eqref{LBrule} hold
\begin{gather*}
[ a \cdot ( m_1 + n_1 ) , m_2 + n_2 ]_{M \oplus N}   = - \rho_2 ( f (n_2 ) ) ( a \cdot m_1 ) + [ a \cdot n_1 , f( n_2) ] \\
 \qquad{}  = - a \cdot \rho_2 ( f( n_2 ) ) \cdot m_1 - \rho_0 ( f (n_2 ) ) ( a ) \cdot m_1
 + a \cdot [ n_1 , f( n_2) ] - \rho_0  ( f(n_2)) (a) \cdot n_1 \\
\qquad{}   = a \cdot (- \rho_2 ( f (n_2 ) ) \cdot m_1 + [ n_1 , f( n_2) ] ) + \rho_{M \oplus N} ( a ) \cdot (m_1 + n_1 ) \\
\qquad{}   = a \cdot [ m_1 + n_1 , m_2 + n_2 ]_{M \oplus N}
+ \rho_{M \oplus N} (m_2 + n_2)( a ) \cdot (m_1 + n_1 ). \tag*{\qed}
  \end{gather*}
  \renewcommand{\qed}{}
\end{proof}

\begin{Example}
Given the Lie--Rinehart algebra
\begin{gather*}
 \big( (M \overset{ g}{\rightarrow} A) , \big( \mathrm{Der}_R (A ,M)  \xlongrightarrow{\pi_1 + \pi_2} \mathrm{Der}_R(A) \oplus H \big)  \big)
\end{gather*}
 in $\mathcal{LM}$, the pairs $(A, \mathrm{Der}_R (A ,M))$ and $( A , M \oplus \mathrm{Der}_R (A ,M) )$
 are Leibniz algebroids.
\end{Example}

\begin{Example}
Given a Lie--Rinehart algebra $(A,L)$, the pair
\begin{gather*}
\big( \big(A \overset{\mathrm{id}}{\rightarrow} A\big), \big(  L \overset{\mathrm{id}}{\rightarrow} L\big) \big)
\end{gather*}
is a Lie--Rinehart algebra object in $\mathcal{LM}$ with corresponding Leibniz algebroid given by $(A, A \oplus L)$ with structure presented in Proposition~\ref{AAL algebroid}.
\end{Example}

\subsection*{Acknowledgements}

The author wishes to thank Uli Kr\"ahmer  for helpful  comments and suggestions. It is also a~pleasure to thank Stuart White for his time,   Jos\'e Figueroa O'Farrill for conversations, and the anonymous referees for their comments and suggestions. This research was funded by an EPSRC DTA grant.

\pdfbookmark[1]{References}{ref}
\LastPageEnding


\begin{thebibliography}{99}
\footnotesize\itemsep=0pt

\bibitem{BaragliaLeibniz}
Baraglia D., Leibniz algebroids, twistings and exceptional generalized
  geometry, \href{http://dx.doi.org/10.1016/j.geomphys.2012.01.007}{\textit{J.~Geom. Phys.}} \textbf{62} (2012), 903--934,
  \href{http://arxiv.org/abs/1101.0856}{arXiv:1101.0856}.

\bibitem{Blokh}
Bloh A., On a generalization of the concept of {L}ie algebra, \textit{Dokl.
  Akad. Nauk SSSR} \textbf{165} (1965), 471--473.

\bibitem{GabiSzlachanyi}
B{\"o}hm G., Szlach{\'a}nyi K., Hopf algebroids with bijective antipodes:
  axioms, integrals, and duals, \href{http://dx.doi.org/10.1016/j.jalgebra.2003.09.005}{\textit{J.~Algebra}} \textbf{274} (2004),
  708--750, \href{http://arxiv.org/abs/math.QA/0302325}{math.QA/0302325}.

\bibitem{Cuvier}
Cuvier C., Homologie de {L}eibniz et homologie de {H}ochschild,
  \textit{C.~R.~Acad. Sci. Paris S\'er.~I Math.} \textbf{313} (1991), 569--572.

\bibitem{Jose}
Figueroa-O'Farrill J.M., Deformations of 3-algebras, \href{http://dx.doi.org/10.1063/1.3262528}{\textit{J.~Math. Phys.}}
  \textbf{50} (2009), 113514, 27~pages, \href{http://arxiv.org/abs/0903.4871}{arXiv:0903.4871}.

\bibitem{Herz}
Herz J.-C., Pseudo-alg\`ebres de {L}ie.~{I}, \textit{C.~R.~Acad. Sci. Paris}
  \textbf{236} (1953), 1935--1937.

\bibitem{HuebschmannPCQ}
Huebschmann J., Poisson cohomology and quantization, \href{http://dx.doi.org/10.1515/crll.1990.408.57}{\textit{J.~Reine Angew.
  Math.}} \textbf{408} (1990), 57--113.

\bibitem{HuebschmannTerm}
Huebschmann J., On the quantization of {P}oisson algebras, in Symplectic
  Geometry and Mathematical Physics ({A}ix-en-{P}rovence, 1990), \textit{Progr.
  Math.}, Vol.~99, Birkh\"auser Boston, Boston, MA, 1991, 204--233.

\bibitem{HuebschmannPaper1}
Huebschmann J., Lie-{-R}inehart algebras, {G}erstenhaber algebras and
  {B}atalin--{V}ilkovisky algebras, \href{http://dx.doi.org/10.5802/aif.1624}{\textit{Ann. Inst. Fourier (Grenoble)}}
  \textbf{48} (1998), 425--440, \href{http://arxiv.org/abs/dg-ga/9704005}{dg-ga/9704005}.

\bibitem{LeonLeibniz}
Ib{\'a}{\~n}ez R., de~Le{\'o}n M., Marrero J.C., Padr{\'o}n E., Leibniz
  algebroid associated with a {N}ambu--{P}oisson structure, \href{http://dx.doi.org/10.1088/0305-4470/32/46/310}{\textit{J.~Phys.~A:
  Math. Gen.}} \textbf{32} (1999), 8129--8144, \href{http://arxiv.org/abs/math-ph/9906027}{math-ph/9906027}.

\bibitem{KinyonWeinstein}
Kinyon M.K., Weinstein A., Leibniz algebras, {C}ourant algebroids, and
  multiplications on reductive homogeneous spaces, \textit{Amer.~J. Math.}
  \textbf{123} (2001), 525--550, \href{http://arxiv.org/abs/math.DG/0006022}{math.DG/0006022}.

\bibitem{YKSCourant}
Kosmann-Schwarzbach Y., Courant algebroids. {A} short history, \href{http://dx.doi.org/10.3842/SIGMA.2013.014}{\textit{SIGMA}}
  \textbf{9} (2013), 014, 8~pages, \href{http://arxiv.org/abs/1212.0559}{arXiv:1212.0559}.

\bibitem{kr1}
Kr{\"a}hmer U., Rovi A., A {L}ie--{R}inehart algebra with no antipode,
  \href{http://dx.doi.org/10.1080/00927872.2014.896375}{\textit{Comm. Algebra}} \textbf{43} (2015), 4049--4053.

\bibitem{UliWagemann}
Kr{\"a}hmer U., Wagemann F., Racks, {L}eibniz algebras and
  {Y}etter--{D}rinfel'd modules, \textit{Georgian Math.~J.}, {t}o appear,
  \href{http://arxiv.org/abs/1403.4148}{arXiv:1403.4148}.

\bibitem{KurdianiPirashvili}
Kurdiani R., Pirashvili T., A {L}eibniz algebra structure on the second tensor
  power, \textit{J.~Lie Theory} \textbf{12} (2002), 583--596.

\bibitem{WeinsteinCourant}
Liu Z.-J., Weinstein A., Xu P., Manin triples for {L}ie bialgebroids,
  \textit{J.~Differential Geom.} \textbf{45} (1997), 547--574,
  \href{http://arxiv.org/abs/dg-ga/9508013}{dg-ga/9508013}.

\bibitem{LodayLeibnizSurvey}
Loday J.-L., Une version non commutative des alg\`ebres de {L}ie: les alg\`ebres
  de {L}eibniz, \textit{Enseign. Math.} \textbf{39} (1993), 269--293.

\bibitem{LodayLeibnizEnveloping}
Loday J.-L., Pirashvili T., Universal enveloping algebras of {L}eibniz algebras
  and (co)homology, \href{http://dx.doi.org/10.1007/BF01445099}{\textit{Math. Ann.}} \textbf{296} (1993), 139--158.

\bibitem{Loday98}
Loday J.-L., Pirashvili T., The tensor category of linear maps and {L}eibniz
  algebras, \href{http://dx.doi.org/10.1023/B:GEOR.0000008125.26487.f3}{\textit{Georgian Math.~J.}} \textbf{5} (1998), 263--276.

\bibitem{Pradines}
Pradines J., Th\'eorie de {L}ie pour les groupo\"\i des dif\/f\'erentiables.
  {C}alcul dif\/f\'erenetiel dans la cat\'egorie des groupo\"\i des
  inf\/init\'esimaux, \textit{C.~R.~Acad. Sci. Paris S\'er.~A-B} \textbf{264}
  (1967), A245--A248.

\bibitem{RinehartForms}
Rinehart G.S., Dif\/ferential forms on general commutative algebras,
  \href{http://dx.doi.org/10.2307/1993603}{\textit{Trans. Amer. Math. Soc.}} \textbf{108} (1963), 195--222.

\bibitem{R2}
Rovi A., Hopf algebroids associated to {J}acobi algebras, \href{http://dx.doi.org/10.1142/S0219887814500923}{\textit{Int.~J. Geom.
  Methods Mod. Phys.}} \textbf{11} (2014), 1450092, 20~pages,
  \href{http://arxiv.org/abs/1311.4181}{arXiv:1311.4181}.

\bibitem{Roytenberg}
Roytenberg D., Courant--{D}orfman algebras and their cohomology, \href{http://dx.doi.org/10.1007/s11005-009-0342-3}{\textit{Lett.
  Math. Phys.}} \textbf{90} (2009), 311--351, \href{http://arxiv.org/abs/0902.4862}{arXiv:0902.4862}.

\bibitem{StienonLoday}
Sti{\'e}non M., Xu P., Modular classes of {L}oday algebroids,
  \href{http://dx.doi.org/10.1016/j.crma.2007.12.012}{\textit{C.~R.~Math. Acad. Sci. Paris}} \textbf{346} (2008), 193--198,
  \href{http://arxiv.org/abs/0803.2047}{arXiv:0803.2047}.

\end{thebibliography}
\end{document}